\newcommand{\E}{\mathbb{E}}
\renewcommand{\P}{\mathbb{P}}
\newcommand{\given}{\,\vert\,}
\newcommand{\givenlarge}{\;\middle\vert\;}
\newcommand{\implicit}{\,;\,}
\newcommand{\indicator}{\mathbf{1}}
\newcommand{\N}{\mathbb{N}}
\newcommand{\R}{\mathbb{R}}
\newcommand{\mcp}{MCP}
\renewcommand\hyper@natlinkbreak[2]{#1}
\renewcommand{\tableofcontents}{
    \vspace{1em}
    \@starttoc{toc}
}
\begin{document}

%------------------------------------------------------------
% TITLE
%------------------------------------------------------------
\title{\fontsize{14}{\baselineskip}\selectfont \textbf{Offline changepoint localization using a matrix of conformal p-values}}
\author{Sanjit Dandapanthula\thanks{Carnegie Mellon University, Department of Statistics}\\[.4em]
    \texttt{\href{mailto:sanjitd@cmu.edu}{sanjitd@cmu.edu}} \and
    Aaditya Ramdas\thanks{Carnegie Mellon University, Department of Statistics and Machine Learning Department}\\[.4em]
    \texttt{\href{mailto:aramdas@cmu.edu}{aramdas@cmu.edu}}\\[1em]
}
\date{\today}
\maketitle

%------------------------------------------------------------
% CONTENT
%------------------------------------------------------------
\begin{abstract}
    Changepoint localization is the problem of estimating the index at which a change occurred in the data generating distribution of an ordered list of data, or declaring that no change occurred. We present the broadly applicable MCP algorithm, which uses a matrix of conformal p-values to produce a confidence interval for a (single) changepoint under the mild assumption that the pre-change and post-change distributions are each exchangeable. We prove a novel conformal Neyman-Pearson lemma, motivating practical classifier-based choices for our conformal score function. Finally, we exemplify the MCP algorithm on a variety of synthetic and real-world datasets, including using black-box pre-trained classifiers to detect changes in sequences of images, text, and accelerometer data.
\end{abstract}

\pagestyle{empty}
\tableofcontents
\pagestyle{plain}

\section{Introduction} \label{sec:introduction}

\par In offline changepoint localization, we are (informally) given some ordered list of data and are told that there may have been a change in the data generating distribution at some unknown index, called the \emph{changepoint}. Suppose, for example, that the data is drawn independently from a density $f_0$ before the changepoint and is drawn independently from another density $f_1 \neq f_0$ post-change. Then, the goal of changepoint localization is to estimate the index at which the change occurred, or to declare that no change occurred. In fact, using the tools of conformal prediction, we are able to non-trivially localize the changepoint \emph{without any further assumptions about $f_0$ and $f_1$}.

\par The problem of changepoint localization arises frequently in statistical practice. For example, consider a firm monitoring the quality of an airplane part (via some metric $X$) after each flight. Suppose that at some point the part breaks, leading to lower performance in some downstream metric. The company may analyze this data weekly, and they may ask ``after which flight did the part break?''. In this scenario, it may neither make sense for the company to make parametric assumptions about the distribution of $X$, nor about the nature of the change (in the mean, variance, modality, etc.). Our algorithm solves this problem by producing a confidence set for the changepoint under far fewer assumptions than prior work.

\par Now, we are ready to formally describe the offline changepoint localization problem in a more general setting. In all of the following, we will let $[K] = \{ 1, \dots, K \}$ for $K \in \N$ and use $\mathcal{M}(S)$ to denote the set of probability measures over $S$. Furthermore, we use $\overset{d}{=}$ to denote equality in distribution. We are given a list of $\mathcal{X}$-valued random variables $(X_t)_{t=1}^n$ for some $n \in \N$. Furthermore, there exists an unknown changepoint $\xi \in [n]$ such that $(X_t)_{t=1}^\xi \sim \P_0$ and $(X_t)_{t=\xi+1}^n \sim \P_1$ are respectively sampled from the pre-change distribution $\P_0 \in \mathcal{M}(\mathcal{X}^\xi)$ and the post-change distribution $\P_1 \in \mathcal{M}(\mathcal{X}^{n - \xi})$. We use $\xi = n$ to denote the case where no change occurs. Let $\P = \P_0 \times \P_1$; we assume that the pre-change data is independent of the post-change data.

\par We make the very general assumption that $\P_0$ is \emph{exchangeable}. This means that $\P_0$ is invariant to permutations in the following sense: for any permutation $\pi : [\xi] \to [\xi]$, we have:
\begin{align*}
    (X_1, \dots, X_\xi) \overset{d}{=} (X_{\pi(1)}, \dots, X_{\pi(\xi)}).
\end{align*}
For instance, it could be the case that $\P_0$ corresponds to i.i.d. observations according to some cumulative distribution function $F_0$. Similarly, we assume that the post-change distribution $\P_1$ is exchangeable as well (which occurs if $\P_1$ corresponds to i.i.d. observations according to some cumulative distribution function $F_1$). We summarize our main contributions below.

\begin{itemize}
    \item We present the \mcp{} algorithm, a novel and widely applicable method which uses a matrix of conformal p-values  to produce a confidence interval for a changepoint under the mild assumption that the pre-change and post-change distributions are each exchangeable.
    \item We show that the \mcp{} algorithm is able to produce (finite-sample) valid confidence sets for the changepoint under (only) the assumption that the pre-change and post-change distributions are exchangeable.
    \item We describe methods for learning the conformal score function used in the \mcp{} algorithm from the data, thereby increasing the power of our method. These are based on a novel ``conformal Neyman-Pearson'' type result.
    \item We demonstrate the \mcp{} algorithm on a variety of synthetic and real-world datasets, including using black-box classifiers to localize changes in sequences of images or text. We show that the \mcp{} algorithm is able to produce narrow confidence sets for the changepoint, even when the change is difficult to detect.
\end{itemize}

This work focuses on localizing a single changepoint; one possible extension (which we do not explore here) is to use a variant of our algorithm to localize multiple changepoints. We begin by reviewing prior work on changepoint localization and conformal approaches to change detection in \Cref{sec:related-work}. We introduce the \mcp{} algorithm in \Cref{sec:mcp}. We then detail several of the subroutines of the \mcp{} algorithm in \Cref{sec:confidence-sets} and prove the validity of the resulting confidence sets. Next, we discuss how the score function should be chosen in \Cref{sec:score-function}. Finally, we apply the \mcp{} algorithm on a variety of synthetic and real-world datasets in \Cref{sec:experiments} to demonstrate its effectiveness and flexibility.

\section{Related work} \label{sec:related-work}

\paragraph{Changepoint analysis.}
Here, we give a review of several classical methods from changepoint analysis; some further details can be found in \citet{truong2020selective}. Traditional changepoint analysis has largely been dominated by parametric approaches based on the generalized likelihood ratio; however, these parametric methods often require knowing the pre-change and post-change distributions to achieve optimality, and often only give point estimators without associated confidence sets.

Methods such as CUSUM \citep{page1955test} and conformal martingales \citep{vovk2003testing} are used for quickest (sequential) changepoint detection. One cannot use them to get confidence sets for the location of the changepoint (sometimes called changepoint localization). In fact, there is only one very recent work of \citet{saha2025post} that attempts to provide confidence sets for changepoints after stopping a sequential detection procedure, but they make parametric assumptions, as well as assume that the pre- and post-change distributions are in known non-overlapping classes. We do not need such assumptions, and methods in their preprint are not applicable to our assumption-lean setup.

\paragraph{Conformal prediction for change \emph{detection}.}
The application of conformal prediction to changepoint problems is relatively recent compared to traditional approaches. Conformal prediction, as surveyed in \citet{shafer2008tutorial}, provides a framework for constructing valid prediction regions with distribution-free guarantees under minimal statistical assumptions. While originally developed for supervised learning tasks, conformal methods have been extended to changepoint detection.

\par In contrast to traditional approaches to quickest changepoint \emph{detection} (detecting whether a change occurred) using conformal martingales \citep{vovk2003testing}, our method allows for \emph{localization} (precise estimation of where the changepoint occurred). Our main technical contribution is to leverage \emph{conformal p-values} constructed online from the data in the conformal martingale framework; these p-values are known to be i.i.d. and distributed as $\mathrm{Unif}(0, 1)$ if no change has occurred yet. By a two-sample test, we are then able to localize the changepoint. Furthermore, in contrast to other conformal approaches to changepoint analysis, we allow for the score function to be learned from the data and prove a novel conformal Neyman-Pearson lemma guiding the choice of these score functions.

\par Early work on connecting conformal prediction to changepoint detection focused primarily on testing for exchangeability violations rather than precise localization. \citet{vovk2003testing} proposed a general approach for testing the exchangeability assumption in an online setting using conformal martingales, laying important groundwork for changepoint detection. The conformal martingale approach is a special case of the e-detector framework laid out in \citet{shin2022detectors}, which provides an extremely general and powerful method for \emph{sequential} change detection. While most prior work using conformal prediction for change detection is in the sequential (online) setting, here we study the \emph{localization} problem in the offline setting; we would like a confidence interval for the changepoint.

\par Building on this foundation, \citet{vovk2021testing} and \citet{vovk2021retrain} introduced conformal test martingales specifically designed for changepoint detection; for them, the motivation was to determine when the data generating distribution changes and a predictive algorithm needs to be retrained. They introduced conformal versions of the CUSUM and Shiryaev-Roberts procedures, which control false alarm rates through martingale properties. However, the primary emphasis remained on detection rather than localization of the changepoint.

\par \citet{volkhonskiy2017inductive} developed a more computationally efficient conformal test martingale, called the inductive conformal martingale, for change detection. Furthermore, they provided conformity measures and betting functions tailored specifically for change detection. However, their method does not provide formal guarantees for the localization task. More recently, \citet{nouretdinov2021conformal} investigated conformal changepoint detection under the assumption that the data generating distribution is continuous, showing that the conformal martingale is statistically efficient but without addressing the question of confidence interval construction for the changepoint location.

\par By bridging the gap between conformal prediction and classical changepoint localization using two-sample testing, our approach opens new possibilities for reliable changepoint analysis in complex, high-dimensional data settings where classical methods fail.

\paragraph{Changepoint localization}
There is previous work in changepoint localization specifically for a change in the mean, but they rely on unspecified hyperparameters or are only asymptotically valid. Furthermore, many interesting changepoints do not involve a mean change (e.g., they may involve a change in the variance, modality, etc.). In \citet{verzelen2023optimal}, the authors provide a CI for the changepoint, but the confidence interval in their Proposition 4 has several unknown constants (only their existence is ensured), and they do not provide a method to explicitly compute them (which is perhaps why the paper has no experiments). As a result, we could not implement their CI in practice, even in their special case of mean change. The confidence set in \citet{cho2022bootstrap} works only for mean changes in univariate data, is only asymptotically valid, and relies on a computationally expensive bootstrap procedure.

The SMUCE estimator introduced in \citet{frick2014multiscale} provides an asymptotically valid confidence set in exponential family regression models. The paper \citet{xu2024change} produces a confidence set for high-dimensional regression problems under many technical assumptions, and the result in \citet{fotopoulos2010exact} can be used to construct asymptotic CIs for a Gaussian mean change. However, note that all of these methods lack finite-sample validity and operate under far more stringent assumptions than our \mcp{} algorithm.

\citet{carlstein1988nonparametric, lee1996nonparametric, zou2014nonparametric} study general nonparametric changepoint problems, but only provide consistent point estimators of the changepoint. \citet{dumbgen1991asymptotic} provides a method to produce confidence sets in the general nonparametric setting, but this method is only asymptotically valid, requires a computationally expensive bootstrap procedure, and relies on several complex theoretical assumptions which are difficult to verify in practice. When instantiated to an exponential family model, the method of \citet{dumbgen1991asymptotic} provides an explicit formula for the confidence set matching that of \citet{siegmund1988confidence}.

\paragraph{Nonparametric changepoint \emph{testing}}
\par Rank-based nonparametric tests, such as those developed by \citet{pettitt1979non} and \citet{ross2012two}, offer distribution-free tests about \emph{whether} there was a changepoint in a given dataset, but these methods do not produce a confidence set for the changepoint on rejection and suffer from lack of statistical power without additional assumptions. In contrast, \mcp{} addresses these limitations by allowing the score function to be learned in a way that achieves nontrivial power, while also providing finite-sample valid confidence sets for changepoint location under minimal distributional assumptions.

\paragraph{Multiple changepoint analysis}
Existing methods for multiple changepoint analysis (which typically only yield point estimators) often rely on the so-called "Isolate-Detect" paradigm, using a sliding window or segmentation to reduce to the single changepoint case (for instance, see Section 5.2 of \citet{truong2020selective} or \citet{anastasiou2022detecting}). In this sense, our algorithm is an important contribution to the changepoint analysis literature, since (after isolation of the changepoint) it provides a method for producing a confidence set.

\section{The matrix of conformal p-values (\mcp{}) algorithm}
\label{sec:mcp}

\par In this section, we discuss our \mcp{} algorithm for conformal changepoint localization in technical detail. We begin by defining score functions; for more details on conformal prediction, see \citet{shafer2008tutorial}. In the following definition, we use $\llbracket \mathcal{X}^m \rrbracket$ to denote the set of unordered \emph{bags} of $m$ data points (which may contain repetitions); such an unordered bag is often called a \emph{multiset}. We will denote an element of $\llbracket \mathcal{X}^m \rrbracket$ by $\llbracket Y_1, \dots, Y_m \rrbracket$, where the $Y_i$ are $\mathcal{X}$-valued random variables.

\definition[Score functions]{A \emph{family of score functions} is a list $((s_{rt})_{r=1}^n)_{t=1}^n$ of functions $s_{rt} : \mathcal{X} \times \llbracket \mathcal{X}^r \rrbracket \times \mathcal{X}^{n-t} \to \R$. Each element in a family of score functions is called a \emph{score function}.}
\smallskip

\par Note that a score function is intuitively intended to be a pre-processing transformation intended to separate the pre-change and post-change data points by projecting them into one dimension. In particular, the score function can be learned in any way that uses its second argument exchangeably, while its third argument can be used non-exchangeably. For intuition, we provide several examples of valid score functions in \Cref{app:score} and at the end of \Cref{sec:score-function}. The goal of our algorithms is to simultaneously test the null hypotheses that a change occurs at time $t \in [n]$:
\begin{figure}[htbp]
    \centering
    \begin{tikzpicture}[x=1cm,y=1cm,every node/.style={font=\footnotesize}]
        \def\n{10}
        \def\t{4}

        \draw (0.5,0) -- (\n+0.5,0);

        \fill[blue!20,opacity=0.5] (0.5,-0.2) rectangle ({\t+0.5},0.2);
        \fill[green!20,opacity=0.5] ({\t+0.5},-0.2) rectangle (\n+0.5,0.2);

        \draw[dashed, red] ({\t+0.5}, -0.4) -- ({\t+0.5}, 0.4);

        \foreach \i in {1,...,\n} {
                \draw[fill=black] (\i,0) circle (0.1);
            }
        \draw[fill=black] (\t,0) circle (0.1);

        \node[below, yshift=-0.3cm] at (1,0) {1};
        \node[below, yshift=-0.3cm] at (\t,0) {$t$};
        \node[below, yshift=-0.3cm] at (\n,0) {$n$};

        \node[above] at ({(\t+0.5+0.5)/2},0.25) {\textit{Pre-change}};
        \node[above] at ({(\t+0.5+\n+0.5)/2},0.25) {\textit{Post-change}};
    \end{tikzpicture}
    \caption{$\mathcal{H}_{0t} : \xi = t$ states that $(X_k)_{k=1}^t \sim \P_0$ and $(X_k)_{k=t+1}^n \sim \P_1$.}
    \label{fig:alt-hyp}
    \vspace{-.2cm}
\end{figure}

\par Then, the associated alternative hypotheses are of the form $\mathcal{H}_{1t} : \xi \neq t$. Fix $\alpha \in (0, 1)$. Ultimately, our algorithms will  use p-values $p_t$ for $\mathcal{H}_{0t}$ to construct a $1 - \alpha$ confidence set for the changepoint:
\begin{align*}
    \mathcal{C}_{1-\alpha}
    = \left\{ t \in [n] : p_t > \alpha \right\},
\end{align*}
If $n$ is present in the resulting confidence set, the practitioner should interpret this to mean that there was possibly no changepoint present in the dataset. On the other hand, if a point estimator for the changepoint is desired, we can output the estimate $\hat{\xi} = \arg\max_{t \in [n]}\; p_t$.

\par As described in \Cref{sec:introduction}, we assume that $\P_0$ and $\P_1$ are exchangeable. Then, we have the following algorithm for changepoint localization, which we call the \mcp{} (matrix of conformal p-values) algorithm. In the following algorithm, let $u$ denote the cumulative distribution of a $\mathrm{Unif}(0, 1)$ distribution. Furthermore, recall that the Kolmogorov-Smirnov (KS) distance between two cumulative distributions $F$ and $G$ is given by
\(
\mathrm{KS}(F, G)
= \sup_{z \in \R}\; \lvert F(z) - G(z) \rvert.
\)
Our algorithm is motivated by the classical Kolmogorov-Smirnov test for goodness of fit. Note that our algorithm does not specially depend on the choice of the KS distance; variants are in \Cref{app:extensions}.

\begin{algorithm}[H]
    \caption{\mcp{}: matrix of conformal p-values}
    \label{alg:mcp}
    \KwIn{$(X_t)_{t=1}^n$ (dataset), $((s_{rt}^{(0)})_{r=1}^n)_{t=1}^n$ and $((s_{rt}^{(1)})_{r=1}^n)_{t=1}^n$ (left and right score function families)}
    \KwOut{$\mathcal{C}_{1-\alpha}$ (a level $1-\alpha$ confidence set for $\xi$)}

    \For{$t \in [n]$}{ \label{line:w-start}
    % $r \gets 1$

    \For{$r$ in $(1, \dots, t)$}{
    \For{$j$ in $(1, \dots, r)$}{
        $\kappa_{rj}^{(t)} \gets s_{rt}^{(0)}(X_j \implicit \llbracket X_1, \dots, X_r \rrbracket, (X_{t+1}, \dots, X_n))$ \label{line:score1}
    }

    % Draw $\theta_r \sim \mathrm{Unif}(0, 1)$
    \tcp{compute per-anomaly p-values}
    $p_r^{(t)} \gets \frac{1}{r} \sum_{j=1}^r \left( \indicator_{\kappa_{rj}^{(t)} > \kappa_{rr}^{(t)}} + \theta_r^{(t)}\, \indicator_{\kappa_{rj}^{(t)} = \kappa_{rr}^{(t)}} \right)$,  ~ ~ \text{ where } $\theta_r^{(t)} \sim \mathrm{Unif}(0, 1)$ \label{line:rank1}

    % $r \gets r + 1$
    }

    % $r \gets n$

    \For{$r$ in $(n, \dots, t+1)$}{
    \For{$j$ in $(r, \dots, n)$}{
        $\kappa_{rj}^{(t)} \gets s_{n-r,\, n-t}^{(1)}(X_j \implicit \llbracket X_{r+1}, \dots, X_n \rrbracket, (X_1, \dots, X_t))$ \label{line:score2}
    }
    \tcp{compute per-anomaly p-values}
    $p_r^{(t)} \gets \frac{1}{n-r+1} \sum_{j=r}^n \left( \indicator_{\kappa_{rj}^{(t)} > \kappa_{rr}^{(t)}} + \theta_r^{(t)}\, \indicator_{\kappa_{rj}^{(t)} = \kappa_{rr}^{(t)}} \right)$, ~ ~ \text{ where } $\theta_r^{(t)} \sim \mathrm{Unif}(0, 1)$ \label{line:rank2}

    % $r \gets r - 1$
    }

    $\hat{F}_0(z) \coloneqq \frac{1}{t} \sum_{r=1}^t \indicator_{p_r^{(t)} \leq z}$, ~ ~
    $\hat{F}_1(z) \coloneqq \frac{1}{n-t} \sum_{r=t+1}^n \indicator_{p_r^{(t)} \leq z}$

    $W_t^{(0)} \gets \sqrt{t}\; \mathrm{KS}(\hat{F}_0, u)$, ~ ~ $W_t^{(1)} \gets \sqrt{n-t}\; \mathrm{KS}(\hat{F}_1, u)$ \label{line:w-end}

    Use \Cref{alg:empirical-test} to map $(W_t^{(0)}, W_t^{(1)})$ to left and right p-values $(p_t^{\mathrm{left}}, p_t^{\mathrm{right}})$ \label{line:pvalue}

    Combine $p_t^{\mathrm{left}}$ and $p_t^{\mathrm{right}}$ into a per-candidate p-value $p_t$ (\Cref{sec:combine-pvalues})
    } \label{line:combine}

    % Compute $p_n$ by testing for exchangeability (\Cref{sec:test-exch})
    
    $\mathcal{C}_{1-\alpha} \gets \{ t \in [n] : p_t > \alpha \}$ \label{line:neyman}

    \Return{$\mathcal{C}_{1-\alpha}$}
\end{algorithm}

Before providing further intuition for \mcp{}, we first note that the case $t = n$ may proceed slightly differently in order to improve power.

\begin{remark}
    For the computation of $p_n$, we also recommend to  compute the sequential ranks in reverse by
    \begin{align*}
        \overline{p}_t = \frac{1}{t} \sum_{j=n-t+1}^n (\indicator_{s(X_j) > s(X_t)} + \theta_t\, \indicator_{s(X_j) = s(X_t)})
    \end{align*}
    and calculate a ``backward p-value'' using a one-sample KS test from uniform using $(\overline{p}_1, \dots, \overline{p}_n)$. Then, the forward and backward p-values obtained from the respective KS tests can be combined using the Bonferroni method (\Cref{sec:combine-pvalues}). If the practitioner knows ahead of time that a changepoint exists in the dataset, the test for $\mathcal{H}_{0t}$ may be skipped in \Cref{alg:mcp} and the validity guarantee continues to hold.
\end{remark}

\par \Cref{alg:mcp} effectively computes all of the \emph{per-anomaly p-values} $p_r^{(t)}$, which can be written as an $n \times n$ matrix; we call this the \emph{matrix of conformal p-values (\mcp{})}:
\begin{align*}
    \mathrm{\mcp{}}
    \coloneqq \begin{bmatrix}
                  p_1^{(1)} & p_1^{(2)} & \cdots & p_1^{(n)} \\
                  p_2^{(1)} & p_2^{(2)} & \cdots & p_2^{(n)} \\
                  \vdots    & \vdots    & \vdots & \vdots      \\
                  p_n^{(1)} & p_n^{(2)} & \cdots & p_n^{(n)}
              \end{bmatrix}.
\end{align*}
For each candidate changepoint $t \in [n]$ we construct $n$ per-anomaly p-values for $\mathcal{H}_{0t}$ (corresponding to a column in the \mcp{}), where each per-anomaly p-value corresponds to one of the data points. Recall from \Cref{alg:mcp} that these p-values are constructed by considering the randomized rank of that point's score, relative to the other points on the same side. Focusing on the $t$th column of the \mcp{}, we then refine these p-values into left and right p-values as in \Cref{fig:pvalue-refinement}. Lastly, $p_t^\mathrm{left}$ and $p_t^\mathrm{right}$ are combined into the \emph{per-candidate} p-value $p_t$, which we use to perform a hypothesis test of $\mathcal{H}_{0t} : \xi = t$ for a candidate changepoint. The computational complexity of the \mcp{} algorithm is dependent on the choice of score function, and the base algorithm runs in $O(n^2)$ time. In fact, most of the score functions that we suggest in \Cref{sec:score-function} and \Cref{app:score} can be computed with an $O(n^2)$ pre-processing step, so our algorithm typically has an overall quadratic complexity.

\begin{figure}[htbp]
    \centering
    \begin{tikzpicture}[scale=1.0, every node/.style={font=\footnotesize}]
        % Define dimensions
        \def\n{10}
        \def\t{4}
        \def\rowheight{0.8}

        % Title at the top with more space
        \node[above, font=\normalsize\bfseries] at (5,1.5) {Refinement of p-values in column $t$ of the \mcp{} matrix};

        % Subtitles with more space and better alignment
        \node[above, font=\small\itshape, text=blue!70!black] at (2.5,0.9) {Left half (pre-change)};
        \node[above, font=\small\itshape, text=green!70!black] at (7.5,0.9) {Right half (post-change)};

        % \mcp{} matrix row (t-th row) with more space
        \draw[thick] (0,0) -- (\n,0);
        \foreach \i in {1,2,...,\n} {
                \draw[thin, gray] (\i-0.5,-0.1) -- (\i-0.5,0.1);
            }

        % P-values with better spacing
        \node[above, yshift=0.25cm] at (0.5,0) {$p_1^{(t)}$};
        \node[above, yshift=0.25cm] at (1.5,0) {$p_2^{(t)}$};
        \node[above, yshift=0.25cm] at (\t-1.5,0) {$\cdots$};
        \node[above, yshift=0.25cm] at (\t-0.5,0) {$p_{t-1}^{(t)}$};
        \node[above, yshift=0.25cm] at (\t+0.5,0) {$p_{t}^{(t)}$};
        \node[above, yshift=0.25cm] at (\t+1.5,0) {$p_{t+1}^{(t)}$};
        \node[above, yshift=0.25cm] at (\n-2.5,0) {$\cdots$};
        \node[above, yshift=0.25cm] at (\n-0.5,0) {$p_{n}^{(t)}$};

        % Arrows pointing to KS statistics with better positioning and moved up
        \draw[->, thick, blue!70!black] (2.5,-0.3) -- (2.5,-0.8);
        \draw[->, thick, green!70!black] (\n-2.5,-0.3) -- (\n-2.5,-0.8);

        % KS statistics with more space between them and better coloring
        \fill[blue!15,opacity=0.7, rounded corners=2pt] (0,-1.4) rectangle (5,-0.9);
        \fill[green!15,opacity=0.7, rounded corners=2pt] (5.5,-1.4) rectangle (10,-0.9);

        \node[left] at (0,-1.2) {Left and right p-values:};
        \node[black!70!black] at (2.5,-1.15) {$p_t^\mathrm{left}$};
        \node[black!70!black] at (7.75,-1.15) {$p_t^\mathrm{right}$};

    \end{tikzpicture}
    \caption{Refinement of the per-anomaly p-values in the $t$th row of the matrix of conformal p-values (\mcp{}) into left and right p-values.}
    \label{fig:pvalue-refinement}
\end{figure}

\par Here is some further intuition for the \mcp{} algorithm (\Cref{alg:mcp}). First, the algorithm splits the data into left and right halves at $t \in [n]$; we focus on the left half, since the right half is analogous. The goal of \mcp{} is first to create p-values $(p_r^{(t)})_{r=1}^t$ which are independent. For each $r \in [t]$, we map the data points $(X_j)_{j=1}^r$ through a score function into exchangeable one-dimensional statistics $(\kappa_{rj}^{(t)})_{j=1}^r$ (\cref{line:score1}). Then, the (normalized) rank statistics of the scores are sequentially computed on the left and right halves (\cref{line:rank1}). If there was no change in the left segment, the ranks for that segment would each be distributed like $\mathrm{Unif}(0, 1)$. In particular, as $t \to \infty$, we know by the Glivenko-Cantelli theorem that $\sup_{z \in \R}\; \lvert \hat{F}_0(z) - u(z) \rvert \to 0$.
\par Note that because $W_t^{(0)}$ only depends on the per-anomaly p-value $p_r^{(t)}$ for $r \in [t]$, it is a distribution-free statistic under $\mathcal{H}_{0t}$ and we can use it for hypothesis testing. Essentially, $W_t^{(0)}$ measures how far the normalized ranks in the left and right halves of the ordered list are from being uniform. We can convert this statistic to a p-value by a hypothesis test (\cref{line:pvalue}) which can be combined with the p-value obtained from the right half for each $t$ (\cref{line:combine}). Finally, we compute $p_n$ by testing for exchangeability as in the remark following \Cref{alg:mcp} and the tests can simultaneously be inverted to form a confidence set by the Neyman construction (\cref{line:neyman}).
\par To motivate the normalization by $\sqrt{t}$ in the definition of $W_t^{(0)}$, recall that we have samples $p_r^{(t)} \sim \operatorname{Unif}(0, 1)$ for $r \in [t]$ and let $(\varphi_z)_{z \geq 0}$ denote the \emph{Brownian bridge} ($\varphi_z = B_z - z B_1$, where $(B_z)_{z \geq 0}$ is a standard Brownian motion on $\R$). Then, a central limit theorem by \citet{donsker1951invariance} implies that
\(
\sqrt{t}\, \left( \sup_{z \in \R}\; \lvert \hat{F}_0(z) - u(z) \rvert \right)
= \sqrt{t}\; \mathrm{KS}(\hat{F}_0, u)
\overset{d}{\longrightarrow} \sup_{z \in [0, 1]}\; \lvert \varphi_z \rvert.
\)
Notice that normalizing by $\sqrt{t}$ stabilizes the KS distance, and we have an explicit form for the asymptotic distribution.

\par Under $\mathcal{H}_{0t}$, the scores $(\kappa_{rj}^{(t)})_{j=1}^r$ are exchangeable for $r \leq t$, meaning that the per-anomaly p-values are independent and uniform both to the left and right of $t$. We then aggregate the per-anomaly p-values to create the per-candidate p-value $p_t$, which allows us to test $\mathcal{H}_{0t}$. In \Cref{sec:confidence-sets}, we discuss the subroutines of the \mcp{} algorithm in greater depth. Note that \Cref{alg:mcp} is also symmetric under reflections of the data; if we reversed the input data as $\tilde{X} \coloneqq (X_n, \dots, X_1)$, the \mcp{} algorithm would output $\tilde{\mathcal{C}}_{1-\alpha} \coloneqq n - \mathcal{C}_{1-\alpha}$, where $\mathcal{C}_{1-\alpha}$ is the output of \Cref{alg:mcp} on $(X_1, \dots, X_n)$.

\section{Subroutines of the \mcp{} algorithm and theoretical guarantees}
\label{sec:confidence-sets}

\par In this section, we describe several key subroutines used in \Cref{alg:mcp} and prove its theoretical validity. We begin by discussing how to generate left and right p-values from the matrix of conformal p-values in \Cref{alg:mcp}.

\subsection{Hypothesis testing: generating left and right p-values} \label{sec:testing}

We now discuss how we can consolidate the matrix of p-values from the \mcp{} algorithm into two p-values for each $\mathcal{H}_{0t}$, which we call the \emph{left} and \emph{right} p-values. This consolidation can be done using an empirical test.
We can use simulations to construct confidence sets for the changepoint by estimating the level $1 - \alpha$ quantile of $W_t^{(0)}$ and $W_t^{(1)}$ under the null hypothesis that there is no changepoint. We use \Cref{alg:empirical-test} to construct left and right p-values. We also detail an alternative method based on an asymptotic KS test or permutation test in \Cref{app:extensions}, which can be used to speed up our algorithm when $t$ and $n - t$ are large. Performing the empirical test technically adds $O(Bn)$ to the time complexity, but is typically very fast in practice due to existence of pre-computed quantiles for the KS test in software packages.

\begin{algorithm}[H]
    \caption{Empirical test}
    \label{alg:empirical-test}
    \KwIn{$(W_t^{(0)}, W_t^{(1)})$ (discrepancy scores), $B \in \N$ (user-chosen sample size)}
    \KwOut{$(p_t^\mathrm{left},\, p_t^\mathrm{right})$ (a pair of p-values for the left and right data)}

    \For{$b \in [B]$}{
    $(X_t^{(b)})_{t=1}^n \sim \mathrm{Unif}([0, 1]^n)$

    Compute $(W_t^{(0, b)})_{t=1}^n$ and $(W_t^{(1, b)})_{t=1}^n$ using \cref{line:w-start} to \cref{line:w-end} of \Cref{alg:mcp} on simulated data $(X_t^{(b)})_{t=1}^n$
    }

    Draw $\theta_0,\, \theta_1 \sim \mathrm{Unif}(0, 1)$

    $p_t^\mathrm{left} \gets \frac{1}{B+1} \left( \theta_0 + \sum_{b=1}^B (\indicator_{W_t^{(0, b)} > W_t^{(0)}} + \theta_0\, \indicator_{W_t^{(0, b)} = W_t^{(0)}}) \right)$

    $p_t^\mathrm{right} \gets \frac{1}{B+1} \left( \theta_1 + \sum_{b=1}^B (\indicator_{W_t^{(1, b)} > W_t^{(1)}} + \theta_1\, \indicator_{W_t^{(1, b)} = W_t^{(1)}}) \right)$

    \Return{$(p_t^\mathrm{left},\, p_t^\mathrm{right})$}
\end{algorithm}

\subsection{Combining p-values and constructing confidence sets}
\label{sec:combine-pvalues}

\par We now discuss how we can combine the left and right p-values into a single p-value $p_t$ for $\mathcal{H}_{0t}$. At the changepoint, we expect $p_\xi$ to be exactly uniformly distributed. Away from the changepoint, we would expect either the left or right p-value to be small.

\par Notice that the left and right p-values are independent if the score functions $s_{rt}^{(0)}$ and $s_{rt}^{(1)}$ have no dependence on its last argument; we call such score functions \emph{non-adaptive}. Intuitively, a non-adaptive left score function $s_{rt}^{(0)}$ is one which does not use data to the right of $t$. On the other hand, it's clear that the left and right p-values won't necessarily be independent if the score functions are \emph{adaptive} ($s_{rt}^{(0)}$ or $s_{rt}^{(1)}$ depend nontrivially on their last argument).

\begin{itemize}
    \item \textbf{Minimum (requires independence):} The choice $p_t = 1 - (1 - \min\{ p_t^\mathrm{left},\, p_t^\mathrm{right} \})^2$ is a powerful combining method for change detection, since away from the changepoint, we would expect either the left or right p-value to be small. Under the null hypothesis $\mathcal{H}_{0t}$, we would expect $p_t^\mathrm{left}$ and $p_t^\mathrm{right}$ to be independent uniform random variables, so that $p_t$ is distribution-free and uniformly distributed. Note that one can use Fisher's method (\citet{fisher1928statistical}) given by $p_t = F_{\chi^2_4}(-2 \log(p_t^\mathrm{left}) - 2 \log(p_t^\mathrm{right}))$; however, we have observed the minimum method to work better in practice.
    \item \textbf{Bonferroni correction (arbitrary dependence):} If the left and right p-values are dependent, then we can use the Bonferroni correction  to combine the p-values, given by $p_t = \min\{ 2 p_t^\mathrm{left}, 2 p_t^\mathrm{right}, 1 \}$. This p-value is not distribution-free, but it is stochastically larger than uniform under the null hypothesis so we can use it for testing.
\end{itemize}

\par In practice, we have found the minimum and Bonferroni correction to be a good choice for combining function, under independence and dependence respectively.

\subsection{Theoretical results}

We begin with the following finite-sample coverage guarantee for the \mcp{} algorithm (\Cref{alg:mcp}).
% Next, we will discuss the theoretical guarantees of the \mcp{} algorithm.

% \subsection{Theoretical guarantees} \label{sec:theoretical}

% \par In this section, we give a finite-sample coverage guarantee for the \mcp{} algorithm which holds under very general conditions; all proofs can be found in \Cref{app:proofs}.

\begin{theorem}[Coverage guarantee (empirical test)]
    \label{thm:coverage-empirical}
    Suppose that $(X_t)_{t=1}^n$ is an exchangeable sequence of random variables with a changepoint $\xi \in [n]$. Let $\mathcal{C}_{1-\alpha}$ be the level $1 - \alpha$ confidence set constructed with \Cref{alg:mcp}, using the empirical test described in \Cref{sec:testing} to construct left and right p-values and using any algorithm in \Cref{sec:combine-pvalues} to combine these p-values. Then, the coverage of $\mathcal{C}_{1-\alpha}$ is at least $1 - \alpha$:
    \begin{align*}
        \P\left( \xi \in \mathcal{C}_{1-\alpha} \right) \geq 1 - \alpha.
    \end{align*}
\end{theorem}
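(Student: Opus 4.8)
The plan is to show that the per-candidate p-value $p_\xi$ attached to the \emph{true} changepoint is stochastically at least as large as $\mathrm{Unif}(0,1)$. Since $\xi\in\mathcal{C}_{1-\alpha}$ precisely when $p_\xi>\alpha$, this yields $\P(\xi\in\mathcal{C}_{1-\alpha})=\P(p_\xi>\alpha)\ge 1-\alpha$, so everything reduces to analyzing the $\xi$-th column of the \mcp{} matrix on the event $\mathcal{H}_{0\xi}$, which holds by hypothesis: $(X_1,\dots,X_\xi)\sim\P_0$ exchangeable, $(X_{\xi+1},\dots,X_n)\sim\P_1$ exchangeable, and the two blocks independent.

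The key ingredient is the (by now classical) fact that online randomized conformal p-values built from an exchangeable sequence are i.i.d.\ $\mathrm{Unif}(0,1)$, which I would restate in the form needed here, where the $r$-th left score function $s^{(0)}_{r\xi}$ may depend on $r$ and on a growing bag and we condition on the opposite block. Condition first on $(X_{\xi+1},\dots,X_n)$: independence of the two blocks keeps $(X_1,\dots,X_\xi)$ exchangeable, and since each $s^{(0)}_{r\xi}$ uses its bag argument symmetrically, the map $(X_1,\dots,X_r)\mapsto(\kappa^{(\xi)}_{r1},\dots,\kappa^{(\xi)}_{rr})$ is equivariant under permutations, so for each fixed $r$ the scores are exchangeable in $j$. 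I would then prove that $(p^{(\xi)}_1,\dots,p^{(\xi)}_\xi)$ are i.i.d.\ $\mathrm{Unif}(0,1)$ by induction on the block length, conditioning on the nested bags $\llbracket X_1\rrbracket\subseteq\dots\subseteq\llbracket X_1,\dots,X_\xi\rrbracket$ (the ``backward'' exchangeable filtration): given the size-$r$ bag, the slot-$r$ element is uniform over it and independent of $\theta^{(\xi)}_r$, so $p^{(\xi)}_r$ is the randomized rank of a uniformly chosen element of a fixed multiset, hence $\mathrm{Unif}(0,1)$; and given the size-$r$ bag together with the slot-$r$ element, $(p^{(\xi)}_1,\dots,p^{(\xi)}_{r-1})$ is the length-$(r-1)$ instance and is i.i.d.\ $\mathrm{Unif}(0,1)$ with a law not depending on the slot-$r$ element, which gives simultaneously its independence from $p^{(\xi)}_r$ and the inductive step. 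Conditioning instead on $(X_1,\dots,X_\xi)$, the same argument (the backward sweep $r=n,\dots,\xi+1$ is literally this construction on the post-change block) shows $(p^{(\xi)}_{\xi+1},\dots,p^{(\xi)}_n)$ are i.i.d.\ $\mathrm{Unif}(0,1)$.

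It follows that $W^{(0)}_\xi=\sqrt\xi\,\mathrm{KS}(\hat F_0,u)$ is a fixed functional of $\xi$ i.i.d.\ uniforms, hence has a reference law $G_\xi$ independent of the conditioning value, so this is also its unconditional law; likewise $W^{(1)}_\xi\sim G_{n-\xi}$. In \Cref{alg:empirical-test} the simulated data is literally i.i.d.\ $\mathrm{Unif}(0,1)$, so applying the same lines of \Cref{alg:mcp} gives $W^{(0,b)}_\xi\sim G_\xi$, mutually independent over $b$ and independent of the real data; therefore $W^{(0)}_\xi,W^{(0,1)}_\xi,\dots,W^{(0,B)}_\xi$ are mutually i.i.d. Rewriting $p^{\mathrm{left}}_\xi=\frac{1}{B+1}\bigl(\#\{b:W^{(0,b)}_\xi>W^{(0)}_\xi\}+\theta_0(1+\#\{b:W^{(0,b)}_\xi=W^{(0)}_\xi\})\bigr)$ exhibits it as the standard randomized rank p-value of one value among $B+1$ exchangeable ones, which is exactly $\mathrm{Unif}(0,1)$; the same holds for $p^{\mathrm{right}}_\xi$.

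Finally I combine the two. For the Bonferroni rule $p_\xi=\min\{2p^{\mathrm{left}}_\xi,2p^{\mathrm{right}}_\xi,1\}$ a union bound on the marginals gives $\P(p_\xi\le\alpha)\le\P(p^{\mathrm{left}}_\xi\le\alpha/2)+\P(p^{\mathrm{right}}_\xi\le\alpha/2)=\alpha$, using only marginal uniformity. For the minimum rule $p_\xi=1-(1-\min\{p^{\mathrm{left}}_\xi,p^{\mathrm{right}}_\xi\})^2$ (and, similarly, Fisher's rule) one needs in addition $p^{\mathrm{left}}_\xi\perp p^{\mathrm{right}}_\xi$; as noted in \Cref{sec:combine-pvalues} this holds for non-adaptive score functions, since then the left p-values depend only on $(X_1,\dots,X_\xi)$ and the right only on $(X_{\xi+1},\dots,X_n)$, two independent blocks, and $1-(1-\min\{U,V\})^2\sim\mathrm{Unif}(0,1)$ for independent uniforms $U,V$. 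The degenerate case $\xi=n$ (empty right block) is covered by the backward one-sample KS p-value from the remark after \Cref{alg:mcp}, combined via Bonferroni, by applying the identical reasoning to the reversed sequence. I expect the main obstacle to be the lemma in the second paragraph: making the induction-and-conditioning argument airtight when the score functions are $r$-dependent and bag-adapted and when one conditions on the opposite block, rather than dealing with the clean fixed-sequence rank statistics of the textbook version.
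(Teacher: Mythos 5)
Your proposal is correct and follows essentially the same route as the paper's proof: establish that the per-anomaly p-values in column $\xi$ are i.i.d.\ $\mathrm{Unif}(0,1)$ under $\mathcal{H}_{0\xi}$ (the paper does this via the randomized probability integral transform applied conditionally on the bag, citing Vovk's Proposition 3.1 for joint independence, whereas you re-derive that proposition by induction on the nested bags), conclude that $W_\xi^{(0)}, W_\xi^{(1)}$ have the same reference laws as the simulated statistics in the empirical test so that $p_\xi^{\mathrm{left}}, p_\xi^{\mathrm{right}}$ are exactly uniform, and finish by the validity of the combining rules and the Neyman construction. Your explicit conditioning on the opposite block and your case analysis of the combining rules are slightly more careful than the paper's one-line conclusions, but the argument is the same.
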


In particular, note that if there is no changepoint, then $\xi=n$, and the confidence set will contain $n$ with probability at least $1-\alpha$. We will see in experiments that when $\xi \neq n$, the confidence sets typically omit $n$.

Recently, \citet{bhattacharya2025theoretical} showed that in the parametric setting and under relatively general triangular array assumptions, the relative length of the confidence set produced by the \mcp{} algorithm converges to zero as $n \to \infty$. Under the same assumptions, that paper also showed consistency of the point estimator $\hat{\xi} \coloneqq \argmax_{t \in [n]}\; p_t$ to the true changepoint $\xi$ at the rate $o_p(n^{1/4})$.

\section{How should the score function be chosen?}
\label{sec:score-function}

\par The score function in the \mcp{} algorithm should be chosen to maximize the power of the test. In this section, we will discuss how the score function should be chosen in practice.

We begin by proving a Neyman-Pearson type lemma for conformal prediction. Suppose we observe independent $\mathcal{X}$-valued data $X_1, \dots, X_{n+1}$ and we wish to test the null $\mathcal{H}_0: X_1, \dots, X_{n+1}$ are i.i.d. against the alternative $\mathcal{H}_1: X_1, \dots, X_k$ are i.i.d. but $X_{k+1}, \dots, X_{n+1}$ are i.i.d. from a different distribution, for some $k \in [n]$. Suppose further that we are forced to use a conformal p-value to perform this test. To elaborate, we must choose a non-conformity \emph{score function} $s:\mathcal{X} \to \R$, draw $\theta_1, \dots, \theta_{n+1} \sim \mathrm{Unif}(0, 1)$ i.i.d., calculate the p-value
\begin{align*}
    p_n[s]
    = \frac{1}{n+1} \sum_{i=1}^{n+1} (\indicator_{s(X_i) > s(X_{n+1})} + \theta_i\, \indicator_{s(X_i) = s(X_{n+1})}),
\end{align*}
and reject the null at level $\alpha$ when $p_n[s] \leq \alpha$.

The natural question then is: what is the optimal (oracle) score function for this task? Let $Q$ denote the distribution of $X_i$ for $i \in \{ k + 1, \dots, n + 1 \}$ and let $R$ denote the distribution of $X_i$ for $i \in [k]$. Defining the densities $q = dQ/d(Q+R)$ and $r = dR/d(Q+R)$, the answer to the above question is the likelihood ratio $q(x)/r(x)$, as we will formalize below.

It will be easier to state the result in terms of the normalized rank functional
\begin{align*}
    T_n[s]
    = \frac{1}{n} \sum_{i=1}^n (\indicator_{s(X_i) < s(X_{n+1})} + \theta_i\, \indicator_{s(X_i) = s(X_{n+1})}),
\end{align*}
where $\theta_1, \dots, \theta_{n+1} \sim \mathrm{Unif}(0, 1)$ are i.i.d. and independent of everything else. Then, we reject $\mathcal{H}_0$ if $T_n[s] \geq t_{n,\, 1 - \alpha}$ for some threshold $t_{n,\, 1 - \alpha}$.

\begin{theorem}[Conformal Neyman-Pearson lemma] \label{thm:conformal-np}
    Let $s^*(x) = q(x) / r(x)$ denote the likelihood ratio non-conformity score. Then, for any other $s$, we have the optimality result
    \begin{align*}
        \E[T_n[s^*]] \geq \E[T_n[s]].
    \end{align*}
    Define the test $\phi_\alpha[s] \coloneqq \indicator_{T_n[s] \geq t_{n,\, 1 - \alpha}}$ for some fixed threshold $t_{n,\, 1 - \alpha} \geq 0$ and let $\alpha$ denote the level of $\phi_\alpha[s^*]$ ($\alpha = \E[\phi_\alpha[s^*]]$ when $Q = R$). Then for \emph{any} level $\alpha$ test $\phi$ of $\mathcal{H}_0$ against $\mathcal{H}_1$, it holds that
    \begin{align*}
        \E[\phi_\alpha[s^*]] \geq \E[\phi].
    \end{align*}
    In particular, it follows that
    \begin{align*}
        \E[\phi_\alpha[s^*]] \geq \E[\phi_\alpha[s]]
    \end{align*}
    for any choice of $s$.
\end{theorem}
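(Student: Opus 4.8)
The plan is to dispatch the three assertions one at a time, reducing each to a classical Neyman--Pearson statement after an appropriate conditioning.

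\emph{Expected-rank optimality.} First I would compute $\E[T_n[s]]$ under $\mathcal{H}_1$. The reference point $X_{n+1}$ has law $Q$; among $X_1,\dots,X_n$ exactly $k$ have law $R$ and $n-k$ have law $Q$; and $\E[\theta_i\,\indicator_{s(X_i)=s(X_{n+1})}]=\tfrac12\P(s(X_i)=s(X_{n+1}))$. Using in addition that $\P(s(X)<s(X'))+\tfrac12\P(s(X)=s(X'))=\tfrac12$ whenever $X,X'$ are i.i.d.\ (by the symmetry of the pair), a direct calculation yields
\begin{align*}
    \E[T_n[s]] \;=\; \frac{k}{n}\,\mathrm{AUC}(s)\;+\;\frac{n-k}{2n},
    \qquad
    \mathrm{AUC}(s)\coloneqq\P\big(s(X)<s(X')\big)+\tfrac12\P\big(s(X)=s(X')\big),
\end{align*}
for independent $X\sim R$ and $X'\sim Q$. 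Since $k\ge 1$, maximizing $\E[T_n[s]]$ over $s$ is equivalent to maximizing $\mathrm{AUC}(s)$, which is the area under the ROC curve of $s$ viewed as a score separating $R$ from $Q$. Writing $\mathrm{AUC}(s)=\int_0^1\beta_s(u)\,du$, where $\beta_s(u)$ is the power of the most powerful size-$u$ test obtained by thresholding $s$ (with uniform tie-breaking), the Neyman--Pearson lemma gives $\beta_{s^*}(u)\ge\beta_s(u)$ for every $u$ for the likelihood ratio $s^*=q/r$; integrating, $\mathrm{AUC}(s^*)\ge\mathrm{AUC}(s)$ and hence $\E[T_n[s^*]]\ge\E[T_n[s]]$.

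\emph{Power optimality against $\mathcal{H}_1$.} For the second claim I would condition on the $\sigma$-algebra $\mathcal{E}$ generated by the unordered bag $\llbracket X_1,\dots,X_{n+1}\rrbracket$. Because we are forced to use a conformal p-value, and the location of the change inside the calibration block $X_1,\dots,X_n$ is unknown, the natural comparison class is level-$\alpha$ tests that use $X_1,\dots,X_n$ exchangeably; equivalently, tests that are (randomized) functions of $(\mathcal{E},X_{n+1})$, hence functions of $X_{n+1}$ alone once $\mathcal{E}$ is fixed. Marginalizing over the labels of the first $n$ positions, a short combinatorial computation shows that, conditional on $\mathcal{E}$, the likelihood ratio of $\mathcal{H}_1$ to $\mathcal{H}_0$ regarded as a function of the value $y$ occupying position $n+1$ is proportional to $e_k\big(\{\,r(X_\ell)/q(X_\ell):X_\ell\ne y\,\}\big)$, the degree-$k$ elementary symmetric polynomial of the remaining inverse likelihood ratios; a one-element interchange argument shows this is nondecreasing in $s^*(y)=q(y)/r(y)$. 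Since, conditional on $\mathcal{E}$, the rank statistic $T_n[s^*]$ is a monotone function of $s^*(X_{n+1})$ up to the uniform tie-break, the event $\{T_n[s^*]\ge t_{n,1-\alpha}\}$ is exactly the Neyman--Pearson rejection region for the bag-conditional simple-versus-simple problem, and its conditional null probability $\P_{\mathcal{H}_0}(T_n[s^*]\ge t_{n,1-\alpha}\mid\mathcal{E})=\alpha$ is free of $\mathcal{E}$ by distribution-freeness of the conformal rank. Now take any test $\phi$ in the comparison class with $\E_{\mathcal{H}_0}[\phi]\le\alpha$: by sufficiency (and completeness) of the order statistics under the nonparametric null, $\E_{\mathcal{H}_0}[\phi\mid\mathcal{E}]\le\alpha$ almost surely, so the conditional Neyman--Pearson lemma gives $\E_{\mathcal{H}_1}[\phi_\alpha[s^*]\mid\mathcal{E}]\ge\E_{\mathcal{H}_1}[\phi\mid\mathcal{E}]$, and integrating over the $\mathcal{H}_1$-law of $\mathcal{E}$ gives $\E[\phi_\alpha[s^*]]\ge\E[\phi]$.

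\emph{Specialization, and the main obstacle.} The final inequality follows at once: for any score $s$ the test $\phi_\alpha[s]$ lies in the comparison class and has exact null level $\P_{\mathcal{H}_0}(T_n[s]\ge t_{n,1-\alpha})=\alpha$, again by distribution-freeness of $T_n[s]$ under the exchangeable null, so the previous paragraph with $\phi=\phi_\alpha[s]$ yields $\E[\phi_\alpha[s^*]]\ge\E[\phi_\alpha[s]]$. I expect the second claim to be the crux: one has to pin down the correct comparison class (exchangeable in the calibration block), carry out the bag-conditional likelihood-ratio computation and check the elementary-symmetric-polynomial monotonicity -- the one genuinely new step -- and then use sufficiency of the order statistics to promote a marginal level-$\alpha$ bound to the conditional bound that the conditional Neyman--Pearson argument needs. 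The first claim is routine once the AUC/ROC dictionary is invoked, and the third is a one-line corollary.
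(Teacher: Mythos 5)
Your proof of the first claim is essentially the paper's proof in different notation: the identity $\E[T_n[s]] = \tfrac{k}{n}\,\mathrm{AUC}(s) + \tfrac{n-k}{2n}$ is exactly the paper's reduction (the $i>k$ summands contribute $1/2$ by the symmetry of an i.i.d.\ pair), and your decomposition $\mathrm{AUC}(s)=\int_0^1\beta_s(u)\,du$ followed by a pointwise Neyman--Pearson comparison is precisely what the paper does via the randomized probability integral transform and a disintegration over the auxiliary uniform. For the second claim you take a genuinely different, and considerably more explicit, route. The paper's argument is two sentences: it invokes the univariate Neyman--Pearson lemma for the one-observation problem $z\sim R$ versus $z\sim Q$ together with the fact that $\phi_\alpha[s]$ has exact size $\alpha$ for every $s$, and it never pins down the class of competitors $\phi$ over which optimality is claimed. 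You instead condition on the bag $\llbracket X_1,\dots,X_{n+1}\rrbracket$, compute the bag-conditional likelihood ratio of the position-$(n+1)$ value as the elementary symmetric polynomial $e_k$ of the remaining inverse likelihood ratios, check monotonicity in $s^*$ via the interchange identity $e_k(A\setminus\{a\})-e_k(A\setminus\{a'\})=(a'-a)\,e_{k-1}(A\setminus\{a,a'\})\ge 0$, and use completeness of the order statistics to upgrade marginal level to conditional level; all of these steps are correct. The one substantive divergence is that you establish optimality only over tests that treat $X_1,\dots,X_n$ exchangeably, whereas the theorem asserts it for \emph{any} level-$\alpha$ test. But for $k<n$ the unrestricted claim cannot hold as literally stated (a test built from $\prod_{i=k+1}^{n+1}s^*(X_i)$, or even a permutation test that looks at the positions of all of $X_{k+1},\dots,X_{n+1}$, dominates a statistic that sees only the rank of $X_{n+1}$), so your explicit restriction is not a defect of your write-up but a repair of an imprecision that the paper's proof silently elides. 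In short: your approach buys a precise comparison class and a complete conditional argument at the cost of one extra combinatorial lemma; the paper's buys brevity at the cost of rigor in exactly the step you identified as the crux.
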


Note that the proof of \Cref{thm:conformal-np} (given in \Cref{app:proofs}) also shows a stronger result; define the conditional power
\begin{align*}
    \beta_s(u) = \E\left[ T_n[s] \givenlarge s(X_{n+1}) = F_{s(X_{n+1})}^{-1}(u) \right],
\end{align*}
which exists and is unique for Lebesgue-almost every $u \in (0, 1)$ by the disintegration of measure. Then, we have that (for Lebesgue-almost every $u \in (0, 1)$)
\begin{align*}
    \beta_s(u) \leq \beta_{s^*}(u).
\end{align*}
In this sense, the likelihood ratio  score yields the uniformly most powerful conformal test. A similar result can be shown without randomizing using the $\theta_i$, assuming that the scores are almost surely distinct. \Cref{sec:e-value} presents an analogous result for e-values.

These results motivate the use of likelihood ratio type score functions in our experiments. Indeed, even when the likelihood ratio is not known exactly, one can hope to learn it and approximate the optimal tests. In particular, we can use a pre-trained multi-class classifier to create a score function, which allows our algorithm to form narrow confidence sets in extremely general settings. Suppose we have a pre-trained multi-class classifier $\hat{g} : \mathcal{X} \to \Delta^\mathcal{S}$, where $\mathcal{S}$ is a discrete set of labels and $\Delta^\mathcal{S}$ represents the probability simplex over $\mathcal{S}$:
\begin{align*}
    \Delta^\mathcal{S} = \left\{ p \in [0, 1]^{\lvert \mathcal{S} \rvert} : \sum_{s \in \mathcal{S}} p_s = 1 \right\}.
\end{align*}
Furthermore, assume that $f_0$ is a density over $g^{-1}(s_0)$ for some $s_0 \in \mathcal{S}$, where $g$ is the ground truth classifier. Similarly, assume that $f_1$ is a density over $g^{-1}(s_1)$ for some $s_1 \in \mathcal{S}$.
We can use the most popular class index
\begin{align*}
    \hat{s}(z_1, \dots, z_k) & = \arg\max_{s \in \mathcal{S}}\; \lvert \{ 1 \leq i \leq k : \hat{g}(z_i)_s \geq \hat{g}(z_i)_{s^\prime} \text{ for all } s^\prime \in \mathcal{S} \} \rvert.
\end{align*}
to estimate the likelihood ratio score function:
\begin{align*}
    s_{rt}(x_r ; \llbracket x_1, \dots, x_r \rrbracket, (x_{t+1}, \dots, x_n))
     & = \frac{\hat{g}(x_r)_{\hat{s}(x_1, \dots, x_r)}}{\hat{g}(x_r)_{\hat{s}(x_{t+1}, \dots, x_n)}}.
\end{align*}

By precomputing the most common class for the prefix and postfix arrays, this score function does not incur any additional computational cost over using the oracle likelihood ratio score function. We give several additional methods for choosing the score function in \Cref{app:score}. Finally, we demonstrate the performance of our algorithms through simulations.

\section{Extension to multiple changepoints} \label{sec:multi-loc}

In this section, we describe how the \mcp{} algorithm (\Cref{alg:mcp}) can be used to localize multiple changepoints. Suppose we are interested in a model where $(X_{\xi_k + 1}, \dots, X_{\xi_{k+1}}) \sim \P_k$ for $0 \leq k \leq K$, where $0 = \xi_0 < \xi_1 < \dots < \xi_{K+1} = n$. Since we would like to apply the \mcp{} algorithm, we assume that $\P_k$ is exchangeable for all $0 \leq k \leq K$ (the data distribution between changepoints is exchangeable). Now, we would like to recover a confidence set which covers all of the changepoints $\{ \xi_k \}_{k=1}^K$ with high probability.

\par To achieve this goal, we consider the popular kernel changepoint detection (KCPD) framework for finding point estimators of multiple changepoints \citep{harchaoui2007retrospective, arlot2019kernel}. Under mild detectability and minimum separation assumptions, denoting the minimum separation between by $\ell$ and fixing any $\epsilon > 0$, \citet{diaz2025consistent} recently showed that the KCPD estimator is consistent in the sense that the number of estimated changepoints is $K$ and the estimated changepoints are all ($\epsilon \ell$)-close to the true changepoints, with probability tending to one.

\par Now, suppose we are on the high-probability event where the estimated number of changepoints is $K$ and the estimated changepoints are all ($\ell / 2$)-close to the true changepoints; denote the estimated changepoints by $(\hat{\xi}_k)_{k=1}^K$. Let $t_k \coloneqq (\hat{\xi}_k + \hat{\xi}_{k-1}) / 2$ for $k \in [K+1]$, defining $\hat{\xi}_0 \coloneqq 0$ and $\hat{\xi}_{K+1} \coloneqq n$. Then, the segments of data $(X_{t_k},\, X_{t_{k+1}})$ have exactly one changepoint contained within them for $k \in [K]$, such that the data to the left and right of the true changepoint is exchangeable. Thus, the \mcp{} algorithm applied separately to each of the intervals $(X_{t_k},\, X_{t_{k+1}})$ produces a $1 - \alpha$ confidence set $\mathcal{C}^{(k)}_{1 - \alpha}$ for $\xi_k$.

\par Putting the above considerations together, we can produce a heuristic asymptotic $1 - \alpha$ confidence set for $\xi_k$ using the \mcp{} algorithm as a wrapper over the KCPD algorithm. We exemplify this method to localize multiple Gaussian mean changes. In this simulation, we choose $n = 1500$ with $K = 4$ and $(\xi_1,\, \xi_2,\, \xi_3,\, \xi_4) = (150,\, 500,\, 820,\, 1100)$. Here, we choose $\P_k = \mathcal{N}(\mu_k,\, 1)$ with $(\mu_0,\, \mu_1,\, \mu_2,\, \mu_3,\, \mu_4) = (-1,\, 1/2,\, 3/2,\, -2,\, -1)$. We depict the input data in \Cref{fig:kcpd-example}.

\image{0.45}{kcpd-example}{Sample of multiple Gaussian mean change data.}

We apply the KCPD algorithm as described in \citet{diaz2025consistent} using the squared exponential kernel
\begin{align*}
    k(z, z^\prime) = \exp\left( \frac{(z - z^\prime)^2}{2 \sigma^2} \right),
\end{align*}
where $\sigma > 0$ is chosen to be the median of $(\lvert X_i - X_j \rvert)_{1 \leq i < j \leq n}$. Then, we apply the \mcp{} algorithm on the intervals $(X_{t_k},\, X_{t_{k+1}})$ for $k \in [K]$ using the oracle likelihood ratio score function as described in \Cref{sec:score-function}; we depict the results in \Cref{fig:kcpd-pvalues}.

The estimated changepoints from KCPD are $(150,\, 497,\, 820,\, 1091)$. Now the \mcp{} confidence set for $\xi_1$ is $[111, 163]$, the confidence set for $\xi_2$ is $[444, 541]$, the confidence set for $\xi_3$ is $[807, 846]$, and the confidence set for $\xi_4$ is $[1072, 1163]$. In particular, once the KCPD algorithm isolates each changepoint, the \mcp{} algorithm is able to produce confidence sets for each changepoint separately. This is an instance of the popular ``Isolate-Detect'' paradigm in changepoint localization \citep{anastasiou2022detecting}, which consists of isolating each changepoint to lie in an interval and then applying a single-changepoint algorithm.

\image{0.45}{kcpd-pvalues}{p-values for multiple Gaussian mean changes using \mcp{} as a wrapper on KCPD. The dashed red lines indicate the true changepoints and the dashed yellow lines indicate the point estimators from KCPD. The regions on the horizontal axis where the p-values lie above the dotted green line ($\alpha = 0.05$) correspond to our heuristic 95\% confidence sets for each changepoint.}

\section{Experiments} \label{sec:experiments}

\par We now demonstrate that the \mcp{} algorithm can be used to construct narrow confidence sets for the changepoint in a variety of settings, and that our confidence sets have good empirical coverage and width. All code for these experiments, including generic implementations of the \mcp{} algorithm, can be found at the following GitHub repository:

\begin{center}
\texttt{\href{https://github.com/sanjitdp/conformal-change-localization}{https://github.com/sanjitdp/conformal-change-localization}}.
\end{center}

All experiments are run using the CPU on our personal MacBook Pro with an M1 Pro processor; our algorithm takes under 10 seconds to run in each experiment.

\subsection{Gaussian mean change} \label{sec:gaussian-meanchange}

\par We begin with a Gaussian mean change simulation. Suppose that we observe data of length $n = 1000$ and the changepoint is $\xi = 400$. Furthermore, suppose that $\P_0 = \otimes_{t=1}^\xi\; \mathcal{N}(-1, 1)$ and $\P_1 = \otimes_{t=\xi+1}^n\; \mathcal{N}(1, 1)$. First, as a baseline, suppose that we have the oracle likelihood ratio score function, as described in \Cref{sec:score-function}. We can then use the \mcp{} algorithm to find discrepancy scores $((W_t^{(i)})_{i=0}^1)_{t=1}^{n-1}$ and transform them into left and right p-values using \Cref{alg:empirical-test}. Finally, we use the methods described in \Cref{sec:combine-pvalues} to combine the left and right p-values into a single p-value for $\mathcal{H}_{0t}$ using the minimum combining function and construct a $1 - \alpha$ confidence set for the changepoint by inverting the test. We plot the resulting p-values in \Cref{fig:oracle}.

\par We begin by discussing our test for $\mathcal{H}_{0n}$ in \Cref{alg:mcp}. When we test $\mathcal{H}_{0n}$ at level $\alpha_0 = 0.01$ using the identity score function and only the forward p-values, we estimate over 1000 simulations (with no changepoint) that the Type I error rate of our test for $\mathcal{H}_{0n}$ is $0.009$, which aligns closely with the theoretical Type I error of $\alpha_0 = 0.01$. In these cases (when there is no changepoint), the resulting confidence set is sometimes much wider, but the above discussion shows that it usually correctly contains $n$; we expand on these findings in \Cref{app:no-change}. On the other hand, over 1000 simulations (with a changepoint), the test for $\mathcal{H}_{0n}$ was always able to detect a deviation from exchangeability and had an empirical power of 1.000.

\par In all future experiments, the dataset contains a changepoint and the test for $\mathcal{H}_{0n}$ correctly rejects the null hypothesis; therefore, $n$ is not included in the resulting confidence sets. Suppose that we do not have access to the oracle score function. Then, as described in \Cref{sec:score-function}, we can use a kernel density estimator to estimate the likelihood ratio using the data before and after $t$ (for each $t \in [n - 1]$), using a Gaussian kernel. We choose bandwidth $10^{-1}$ when $t \in \{ 1, n - 1 \}$ and bandwidth $r^{-1/5}$ otherwise, where $r$ is the number of samples used to learn each KDE; this is known as Scott's rule (due to \citet{scott1979optimal}). Then, we repeat the process described above to compute the left and right p-values. However, due to dependence of the left and right p-values, we use the Bonferroni combining rule, plotting the resulting p-values $(p_t)_{t=1}^{n-1}$ in \Cref{fig:kde}.

\imagesbs{0.4}{oracle}{oracle score function family}{kde}{KDE learned score function family}{p-values for a Gaussian mean change at $\xi = 400$ (a single run). The dashed red line indicates the true changepoint, and the region on the horizontal axis where the p-values lie above the dotted green line ($\alpha = 0.05$) corresponds to our 95\% confidence set. The point estimators are (a) $\hat{\xi} = 409$ and (b) $\hat{\xi} = 380$, both of which are close to the true $\xi = 400$.}

\par We obtain the confidence set $[359, 435]$ using the oracle score function and the confidence set $[324, 424]$ using the learned score function. In particular, the width of our confidence interval does not suffer much (compared to the oracle likelihood ratio score function) even if our score is learned using a kernel density estimator.

\par Next, in \Cref{fig:oracle-50-oracle-200}, we plot the resulting p-values when the true changepoint is closer to the edge of the dataset. We observe that the size of the confidence set and quality of the point estimator do not suffer too much even when the changepoint is very close to the edge of the dataset. By symmetry, we expect similar results to hold when the true changepoint is in the right half of the data.

\imagesbs{0.4}{oracle-50}{oracle score function family}{oracle-200}{KDE learned score function family}{p-values for a Gaussian mean change at (a) $\xi = 50$ and (b) $\xi = 200$ (a single run). The dashed red line indicates the true changepoint, and the region on the horizontal axis where the p-values lie above the dotted green line ($\alpha = 0.05$) corresponds to our 95\% confidence set. The point estimators are (a) $\hat{\xi} = 14$ and (b) $\hat{\xi} = 189$, both of which are relatively close to the true changepoints. The sizes of the confidence sets are (a) 60 and (b) 53.}

\subsubsection{Frequentist operating characteristics of \mcp{}}

Running the same simulation 200 times for a change from $\mathcal{N}(-1, 1)$ to $\mathcal{N}(1, 1)$ at $\xi = 400$ with $n = 1000$ samples, we plot the empirical power of our test for $\mathcal{H}_{0t}$ against various values of $t \neq \xi = 400$ in \Cref{fig:power-curve-oracle-power-curve-kde}. As expected, we observe that the power of the test increases as we move further away from the true changepoint.

\imagesbs{0.4}{power-curve-oracle}{oracle score function family}{power-curve-kde}{KDE learned score function family}{Empirical power of the \mcp{} test for $\mathcal{H}_{0t}$ against various values of $t \neq \xi = 400$, using (a) the oracle likelihood ratio score function (averaged over 200 runs) and (b) the KDE score function (averaged over 10 runs).}

Additionally, we report the empirical average width and coverage of our confidence sets at various levels, as well as the bias and mean absolute deviation of our point estimator $\hat{\xi} = \arg\max_{t \in [n]}\; p_t$, in \Cref{tab:width-coverage}. In \Cref{tab:width-coverage}, we also compare the performance of the \mcp{} algorithm on datasets with varying coverage, signal strength and sample size.

\begin{table}[htbp]
    \centering
    \begin{tabular}{|c|c|c|c|c|c|c|}
        \hline
        $n$                  & $\delta$             & Target $\alpha$ & Avg. width & Empirical coverage & Bias    & Mean abs. dev. \\
        \hline
        \multirow{4}{*}{100} & \multirow{2}{*}{2.0} & 0.05     & $22.29_{\mathrm{se:}\, 6.9}$      & $0.93_{\pm 0.03}$  & \multirow{2}{*}{$-0.14_{\mathrm{se:}\, 4.7}$} & \multirow{2}{*}{$3.50_{\mathrm{se:}\, 3.1}$}           \\
        \cline{3-5}
                             &                      & 0.5      & $6.63_{\mathrm{se:}\, 4.9}$       & $0.50_{\pm 0.07}$  &  &            \\
        \cline{2-7}
                             & \multirow{2}{*}{1.0} & 0.05     & $31.41_{\mathrm{se:}\, 10.1}$       & $0.47_{\pm 0.03}$  & \multirow{2}{*}{$-0.33_{\mathrm{se:}\, 6.6}$} & \multirow{2}{*}{$5.10_{\mathrm{se:}\, 4.6}$}           \\
        \cline{3-5}
                             &                      & 0.5      & $9.03_{\mathrm{se:}\, 6.4}$      & $0.94_{\pm 0.07}$  &  &            \\
        \hline
        \multirow{4}{*}{200} & \multirow{2}{*}{2.0} & 0.05     & $29.18_{\mathrm{se:}\, 8.0}$      & $0.96_{\pm 0.03}$  & \multirow{2}{*}{$-0.50_{\mathrm{se:}\, 5.6}$} & \multirow{2}{*}{$4.43_{\mathrm{se:}\, 3.6}$}           \\
        \cline{3-5}
                             &                      & 0.5      & $8.88_{\mathrm{se:}\, 6.6}$       & $0.54_{\pm 0.07}$  &  &            \\
        \cline{2-7}
                             & \multirow{2}{*}{1.0} & 0.05     & $38.96_{\mathrm{se:}\, 11.7}$      & $0.97_{\pm 0.03}$  & \multirow{2}{*}{$-0.12_{\mathrm{se:}\, 7.9}$} & \multirow{2}{*}{$5.96_{\mathrm{se:}\, 5.4}$}           \\
        \cline{3-5}
                             &                      & 0.5      & $12.12_{\mathrm{se:}\, 8.7}$      & $0.53_{\pm 0.07}$  &  &           \\
        \hline
        \multirow{4}{*}{500} & \multirow{2}{*}{2.0} & 0.05     & $41.05_{\mathrm{se:}\, 12.0}$      & $0.93_{\pm 0.03}$  & \multirow{2}{*}{$-2.22_{\mathrm{se:}\, 8.7}$} & \multirow{2}{*}{$7.02_{\mathrm{se:}\, 5.3}$}           \\
        \cline{3-5}
                             &                      & 0.5      & $13.04_{\mathrm{se:}\, 10.3}$      & $0.53_{\pm 0.07}$  &  &          \\
        \cline{2-7}
                             & \multirow{2}{*}{1.0} & 0.05     & $55.13_{\mathrm{se:}\, 16.7}$      & $0.94_{\pm 0.03}$  & \multirow{2}{*}{$-0.78_{\mathrm{se:}\, 12.6}$} & \multirow{2}{*}{$8.96_{\mathrm{se:}\, 8.5}$}           \\
        \cline{3-5}
                             &                      & 0.5      & $17.42_{\mathrm{se:}\, 13.3}$      & $0.50_{\pm 0.07}$  &  &            \\
        \hline
    \end{tabular}
    \vspace{.2cm}
    \caption{Average width and coverage of confidence sets for Gaussian mean change, as well as the bias ($\E[\hat{\xi} - \xi]$) and mean absolute deviation ($\E[\lvert \hat{\xi} - \xi \rvert]$) of the point estimator over 1000 trials. The length of the data is $n$ and we observe a change from $\mathcal{N}(-\delta, 1)$ to $\mathcal{N}(\delta, 1)$ at $\xi = 2n/5$. We run the \mcp{} algorithm using the oracle likelihood ratio score function to construct a $1 - \alpha$ confidence set for the changepoint. Note that the empirical coverage is very close to $1-\alpha$, and the bias is very small relative to $n$ (though systematically negative, because the changepoint is in the left half). Binomial errors are reported for empirical coverage (95\%) and standard errors are reported for all other values.}
    \label{tab:width-coverage}
    \vspace{-.3cm}
\end{table}

We know from \Cref{thm:coverage-empirical} that the true coverage probability is at least 50\% and 95\% respectively for a 50\% or 95\% confidence interval constructed using our method; this is what we observe empirically as well. Furthermore, we observe that the point estimator is empirically close to the true changepoint; this is expected due to a recent result of \citet{bhattacharya2025theoretical} showing that the point estimator $\hat{\xi}$ is consistent at the rate $o_p(n^{1/4})$ in a general triangular array setup. Additionally, we observe in practice that because the changepoint is in the left half of the data, the point estimator tends to be biased towards the left of the true changepoint.

\subsubsection{Comparison against baselines}

Next, we compare the performance of the \mcp{} algorithm against several baselines for changepoint localization. As discussed in \Cref{sec:related-work}, there are relatively few existing methods for constructing confidence sets for changepoints, especially in the nonparametric setting. We consider one baseline algorithm from \citet{siegmund1988confidence}, which constructs confidence intervals for the changepoint under a (parametric) exponential family assumption by deriving an approximation to the asymptotic distribution of the MLE. We also consider the bootstrap-based algorithm from \citet{cho2022bootstrap}, which constructs confidence intervals for a nonparametric univariate mean change. We compare the average width and empirical coverage of our confidence sets against these baselines in \Cref{tab:baseline-comparison}. Here, we run all methods on data of length $n = 1000$ with a change from $\mathcal{N}(-1, 1)$ to $\mathcal{N}(1, 1)$ at $\xi = 400$.

\begin{table}[htbp]
    \centering
    \begin{tabular}{|c|c|c|c|c|c|c|}
        \hline
        Method                         & Avg. width & Empirical coverage & Bias    & Mean abs. dev. \\
        \hline
        \mcp{} (ours)                  & $41.69_{\mathrm{se:}\, 0.6}$      & $0.96_{\pm 0.03}$  & $-0.32_{\mathrm{se:}\, 9.0}$ & $7.02_{\mathrm{se:}\, 5.6}$           \\
        \hline
        \citet{siegmund1988confidence} & $1.31_{\mathrm{se:}\, 0.5}$       & $1.00_{\pm 0.03}$  & $-0.04_{\mathrm{se:}\, 0.3}$ & $0.15_{\mathrm{se:}\, 0.2}$           \\
        \hline
        \citet{cho2022bootstrap}       & $3.00_{\mathrm{se:}\, 0.1}$       & $0.94_{\pm 0.03}$  & $1.04_{\mathrm{se:}\, 0.3}$    & $1.05_{\mathrm{se:}\, 0.3}$           \\
        \hline
    \end{tabular}
    \vspace{.2cm}
    \caption{Average width and empirical coverage of confidence sets for Gaussian mean change, as well as the bias ($\E[\hat{\xi} - \xi]$) and mean absolute deviation ($\E[\lvert \hat{\xi} - \xi \rvert]$) of the point estimator over 1000 trials. The length of the data is $n = 1000$ and we observe a change from $\mathcal{N}(-1, 1)$ to $\mathcal{N}(1, 1)$ at $\xi = 400$. We run the \mcp{} algorithm using the oracle likelihood ratio score function to construct a 95\% confidence set for the changepoint, and compare against the methods of \citet{siegmund1988confidence} and \citet{cho2022bootstrap}. Binomial errors are reported for empirical coverage (95\%) and standard errors are reported for all other values.}
    \label{tab:baseline-comparison}
    \vspace{-.3cm}
\end{table}

We observe from \Cref{tab:baseline-comparison} that the methods of \citet{siegmund1988confidence} and \citet{cho2022bootstrap} do produce sharper confidence sets under their respective modeling assumptions, but our \mcp{} algorithm operates under far more general assumptions. For instance, our method does not require the existence of \emph{any} moments of the data distribution, and is able to operate on high-dimensional data. To illustrate this point, we consider a change from a Cauchy distribution with location $-1$ to a Cauchy distribution with location 1 at $\xi = 400$. The method of \citet{cho2022bootstrap} is also not applicable here, since the Cauchy distribution does not have a finite mean, and \citet{siegmund1988confidence} is misspecified here, since the data is not Gaussian. 
We show in \Cref{tab:cauchy-change} that these methods fare poorly, but our \mcp{} algorithm is able to produce valid confidence sets even in this challenging scenario, even with misspecified score function (i.e.\ even if we incorrectly assumed the data was Gaussian to design the score, our coverage is maintained).

\begin{table}[htbp]
    \centering
    \begin{tabular}{|c|c|c|c|c|}
        \hline
        Method                   & Avg. width & Empirical coverage & Bias  & Mean abs. dev. \\
        \hline
        \mcp{} -- oracle score (ours)            & $53.27_{\mathrm{se:}\, 17.4}$      & $0.97_{\pm 0.03}$  & $0.25_{\mathrm{se:}\, 19.8}$  & $8.78_{\mathrm{se:}\, 10.9}$           \\
        \hline
        \mcp{} -- Gaussian score (ours)            & $70.69_{\mathrm{se:}\, 20.1}$ & $0.94_{\pm 0.03}$  & $-3.685_{\mathrm{se:}\, 32.2}$  & $17.40_{\mathrm{se:}\, 20.3}$           \\
        \hline
        \citet{siegmund1988confidence} & $1.01_{\mathrm{se:}\, 0.6}$     & $0.12_{\pm 0.03}$  & $31.79_{\mathrm{se:}\, 110.9}$ & $77.94_{\mathrm{se:}\, 84.5}$          \\
        \hline
        \citet{cho2022bootstrap} & $181.46_{\mathrm{se:}\, 48.1}$     & $0.82_{\pm 0.03}$  & $23.96_{\mathrm{se:}\, 67.7}$ & $50.00_{\mathrm{se:}\, 53.2}$          \\
        \hline
    \end{tabular}
    \vspace{.2cm}
    \caption{Average width and empirical coverage of confidence sets for a Cauchy location change over 1000 trials. The length of the data is $n = 1000$ and we observe a change from $\mathrm{Cauchy}(-1, 1)$ to $\mathrm{Cauchy}(1, 1)$ at $\xi = 400$. We run the \mcp{} algorithm using both the oracle likelihood ratio score function as well as the misspecified likelihood ratio score function assuming that there is a change from $\mathcal{N}(-1, 1)$ to $\mathcal{N}(1, 1)$, to construct a 95\% confidence set for the changepoint. We compare against the method of \citet{siegmund1988confidence}, which produces narrow confidence sets with extremely low coverage, as well as the method of \citet{cho2022bootstrap}, which produces wide confidence sets with low coverage in this setting. Binomial errors are reported for empirical coverage (95\%) and standard errors are reported for all other values.}
    \label{tab:cauchy-change}
    \vspace{-.3cm}
\end{table}

Furthermore, as we demonstrate in the following sections, our \mcp{} algorithm is able to operate in many practical settings (e.g., text data, accelerometer data, and image data) where existing methods such as those of \citet{siegmund1988confidence} and \citet{cho2022bootstrap} are not applicable.

\subsubsection{Visualizing the matrix of conformal p-values}

\par To give more intuition for the \mcp{} algorithm, we can also visualize the matrix of conformal p-values (\mcp{}) as described in \Cref{sec:mcp}, in \Cref{fig:mcp-mcp-validity}. We consider the setting where $n = 1000$ and there is a change from $\mathcal{N}(-1, 1)$ to $\mathcal{N}(1, 1)$ at $\xi = 400$. Observe that only when $t = \xi = 400$ are the p-values in row $t$ truly uniformly distributed and independent in the left and right halves respectively, as they should be under $\mathcal{H}_{0t}$. For instance, if $t < \xi$, then the right p-values will be invalid and the Kolmogorov-Smirnov statistic $W_t^{(1)}$ used in the \mcp{} algorithm (\Cref{alg:mcp}) will detect their deviation from uniformity.

\imagesbs{0.35}{mcp}{matrix of conformal p-values (\mcp{})}{mcp-validity}{validity of conformal p-values}{(a) Matrix of conformal p-values (\mcp{}) for a Gaussian mean change using the oracle score function. The true changepoint occurs at $\xi = 400$. (b) Validity of p-values in the matrix of conformal p-values (green p-values are valid, red p-values are invalid). Note that all of the p-values are only valid when $t = \xi = 400$, shown as the yellow row.}

\subsection{Sentiment change using large language models (LLMs)}

Next, we consider a simulation with a sentiment change in a sequence of text samples, showing that our algorithm is practical for localizing changepoints in language data. We consider the Stanford Sentiment Treebank (SST-2) dataset of movie reviews labeled with a binary sentiment, introduced in \citet{socher2013recursive}. Suppose that we would like to localize a change from a generally positive sentiment in movie reviews to a negative sentiment. In practice, this method may be applied to localize a change in customer sentiment toward a product or general approval of a political leader.

Suppose we observe $n = 1000$ reviews with a changepoint at $\xi = 400$; the pre-change class $\P_0$ consists of i.i.d. draws of positive reviews and the post-change class $\P_1$ consists of i.i.d. draws of negative reviews. For instance, here is a sample of the data before and after the changepoint:
\begin{itemize}
    \item $t = 399$ (positive): ``invigorating, surreal, and resonant with a rainbow of emotion.''
    \item $t = 400$ (positive): ``a fascinating, bombshell documentary''
    \item $t = 401$ (negative):  ``a fragment of an underdone potato''
    \item $t = 402$ (negative): ``is a disaster, with cloying messages and irksome characters''
\end{itemize}

As described in \Cref{sec:score-function}, we can use a pretrained multi-class classifier to estimate the likelihood ratio using the data before and after $t$ (for each $t \in [n-1]$). For this simulation, we use the DistilBERT base model, fine-tuned for sentiment analysis on the uncased SST-2 dataset (\citet{sanh2019distilbert}).

The resulting confidence set is $[368, 420]$ and is shown in \Cref{fig:sentiment-pvalues}; note that we were able to obtain such a tight confidence interval without even knowing the nature of the change that would happen. In fact, consider the more realistic (but much more difficult to localize) scenario where general sentiment changes from being 60\% positive pre-change to 40\% positive post-change. In this case, we are still able to form a nontrivial confidence set, shown in \Cref{fig:sentiment-pvalues-mixed}. Even though the change is extremely subtle and we impose very general modeling assumptions, we are able to obtain the 95\% confidence set $[200, 475] \cup \{ 478, 487, 489, 493 \}$ for the changepoint.

\imagesbs{0.4}{sentiment-pvalues}{full sentiment change}{sentiment-pvalues-mixed}{mixed sentiment change}{p-values for SST-2 sentiment change at $\xi = 400$ using DistilBERT trained for sentiment analysis. The dashed red line indicates the true changepoint, and the region on the horizontal axis where the p-values lie above the dotted green line ($\alpha = 0.05$) corresponds to our 95\% confidence set. The point estimators are (a) $\hat{\xi} = 388$ and (b) $\hat{\xi} = 373$, both of which are close to the true $\xi = 400$.}

\subsection{Human activity change: accelerometer data}

Here, we include a simulation using data from the Human Activity Recognition (HAR) dataset, introduced in \citet{anguita2013public}. This dataset contains tri-axial accelerometer data collected from a smartphone accelerometer mounted at the waist of 30 human subjects during various activities. Suppose we observe $n = 1000$ samples accelerometer with a changepoint at $\xi = 400$; the pre-change class $\P_0$ consists of accelerometer samples from the ``walking upstairs'' activity and the post-change class $\P_1$ consists of samples from the ``standing'' activity. For simplicity, we only use accelerometer data from the first axial direction, so the data we work with is real-valued. We show the dataset in \Cref{fig:had-examples}.
\image{0.45}{had-examples}{Human Activity Recognition (HAR) accelerometer data containing an activity change from ``walking upstairs'' to ``standing'' with a changepoint at $\xi = 400$.}

Then, as described in \Cref{sec:score-function}, we can use a kernel density estimator (KDE) to estimate the likelihood ratio using the data before and after $t$ (for each $t \in [n-1]$), using a Gaussian kernel. We choose bandwidth $10^{-1}$ when $t \in \{ 1, n - 1 \}$ and bandwidth $r^{-1/5}$ otherwise, where $r$ is the number of samples used to learn each KDE; this is known as Scott's rule (due to \citet{scott1979optimal}). However, due to dependence of the left and right p-values, we use the Bonferroni combining rule, plotting the resulting p-values $(p_t)_{t=1}^{n-1}$ in \Cref{fig:had-pvalues}.

The resulting confidence set is $[376, 444]$ and contains the changepoint, as shown in \Cref{fig:had-pvalues}. Note that we were able to obtain such a tight confidence interval without any assumptions about the nature of the change. As discussed in \Cref{sec:related-work}, a lot of work on prior nonparametric changepoint localization focuses on localizing a change in mean. However, for our dataset, the change in mean between the left and right halves is only $-0.003$, making it difficult to localize using this class of methods.

\image{0.45}{had-pvalues}{p-values for HAR sentiment change at $\xi = 400$ using kernel density estimators. The dashed red line indicates the true changepoint, and the region on the horizontal axis where the p-values lie above the dotted green line ($\alpha = 0.05$) corresponds to our 95\% confidence set. The point estimator is $\hat{\xi} = 418$, which is close to the true changepoint at $\xi = 400$.}

\subsection{CIFAR-100 image change: bears vs. beavers}

\par Finally, consider a simulation of a digit change from the CIFAR-100 image dataset \citep{krizhevsky2009learning}; even when the data is high-dimensional with complex structure, our algorithm can be used to localize the changepoint. This dataset contains $32 \times 32 \times 3$ RGB images from 100 classes, making the localization problem challenging. Suppose that we observe data of length $n = 800$ and the changepoint is $\xi = 300$;  the pre-change class $\P_0$ consists of i.i.d. draws from the set of bear images and the post-change class $\P_1$ consists of i.i.d. draws from the set of beaver images (\Cref{fig:cifar100-sample}). Note that bears and beavers are visually difficult to tell apart, so localization of the changepoint is a difficult task.

\image{0.4}{cifar100-sample}{Partial sample of CIFAR-100 change from bear to beaver images with a changepoint at $\xi = 300$.}

\par As described in \Cref{sec:score-function}, we can use a pretrained multi-class classifier to estimate the likelihood ratio using the data before and after $t$ (for each $t \in [n-1]$). We use the \ci{resnet18_cifar100} model from Hugging Face, trained by Eduardo Dadalto, which achieves 79.26\% accuracy on the test dataset.

\par We can then use the \mcp{} algorithm and \Cref{alg:empirical-test} to produce a confidence interval. In this case, the left and right p-values are independent, since the left score function $s_t^{(0)}$ only depends on data to the left of $t$ and the right score function $s_t^{(1)}$ only depends on data to the right of $t$. Therefore, we use the minimum method to combine the p-values, plotting the resulting p-values $(p_t)_{t=1}^{n-1}$ in \Cref{fig:cifar100-pvalues}; the resulting confidence set is $[263, 316]$. For a similar experiment regarding a digit change from the MNIST dataset (\citet{deng2012mnist}), see \Cref{app:mnist-digitchange}.

\image{0.5}{cifar100-pvalues}{p-values for CIFAR-100 change from bears to beavers at $\xi = 300$ using a pre-trained digit classifier. The dashed red line indicates the true changepoint, and the region on the horizontal axis where the p-values lie above the dotted green line ($\alpha = 0.05$) corresponds to our 95\% confidence set. The point estimator is $\hat{\xi} = 301$, which is close to the true $\xi = 300$.}

\section{Conclusion}

We introduced the \mcp{} algorithm, a novel method for constructing confidence sets for the changepoint, assuming that the pre-change and post-change distributions are each exchangeable. We demonstrated that the \mcp{} algorithm can be used to construct nontrivial confidence sets for the changepoint in a variety of settings and enjoys a finite-sample coverage guarantee. Furthermore, we demonstrated through simulation and experiments that the algorithm has good empirical coverage and width. We also discussed how the parameters of the \mcp{} algorithm, including the score function, should be chosen in practice to produce tight confidence intervals.

\subsection*{Acknowledgments}

We thank Carlos Padilla for early discussions, Jing Lei for discussions relating to the conformal Neyman-Pearson lemma, Swapnaneel Bhattacharyya for noticing an error in an early preprint, Alessandro Rinaldo for several references to prior work in changepoint localization, and Rohan Hore for providing an implementation of the KCPD algorithm.

%------------------------------------------------------------
% BIBLIOGRAPHY
%------------------------------------------------------------
% \nocite{*}
\printbibliography

@article{shafer2008tutorial,
  title   = {A tutorial on conformal prediction.},
  author  = {Shafer, Glenn and Vovk, Vladimir},
  journal = {Journal of Machine Learning Research},
  volume  = {9},
  number  = {3},
  year    = {2008}
}

@article{ramdas2024hypothesis,
  title={Hypothesis testing with e-values},
  author={Ramdas, Aaditya and Wang, Ruodu},
  journal={Foundations and Trends in Statistics},
  volume={1},
  number={1-2},
  pages={1--390},
  year={2025},
  publisher={Emerald Publishing Limited}
}

@book{donsker1951invariance,
  title     = {An Invariance Principle for Certain Probability Limit Theorems},
  author    = {Donsker, M. D.},
  publisher = {American Mathematical Society. Memoirs},
  year      = {1951}
}

@book{fisher1928statistical,
  title     = {Statistical methods for research workers},
  author    = {Fisher, R. A.},
  year      = {1928},
  publisher = {Oliver and Boyd}
}

@article{page1955test,
  title     = {A test for a change in a parameter occurring at an unknown point},
  author    = {Page, Ewan Stafford},
  journal   = {Biometrika},
  volume    = {42},
  number    = {3/4},
  pages     = {523--527},
  year      = {1955},
  publisher = {JSTOR}
}

@article{truong2020selective,
  title     = {Selective review of offline change point detection methods},
  author    = {Truong, Charles and Oudre, Laurent and Vayatis, Nicolas},
  journal   = {Signal Processing},
  volume    = {167},
  pages     = {107299},
  year      = {2020},
  publisher = {Elsevier}
}

@article{pettitt1979non,
  title     = {A non-parametric approach to the change-point problem},
  author    = {Pettitt, Anthony N},
  journal   = {Journal of the Royal Statistical Society: Series C (Applied Statistics)},
  volume    = {28},
  number    = {2},
  pages     = {126--135},
  year      = {1979},
  publisher = {Wiley Online Library}
}

@article{ross2012two,
  title     = {Two nonparametric control charts for detecting arbitrary distribution changes},
  author    = {Ross, Gordon J and Adams, Niall M},
  journal   = {Journal of Quality Technology},
  volume    = {44},
  number    = {2},
  pages     = {102--116},
  year      = {2012},
  publisher = {Taylor \& Francis}
}

@article{vovk2021testing,
  title     = {Testing randomness online},
  author    = {Vovk, Vladimir},
  journal   = {Statistical Science},
  volume    = {36},
  number    = {4},
  pages     = {595--611},
  year      = {2021},
  publisher = {JSTOR}
}

@inproceedings{vovk2003testing,
  title     = {Testing exchangeability on-line},
  author    = {Vovk, Vladimir and Nouretdinov, Ilia and Gammerman, Alexander},
  booktitle = {Proceedings of the 20th International Conference on Machine Learning (ICML)},
  pages     = {768--775},
  year      = {2003}
}

@inproceedings{vovk2021retrain,
  title        = {Retrain or not retrain: Conformal test martingales for change-point detection},
  author       = {Vovk, Vladimir and Petej, Ivan and Nouretdinov, Ilia and Ahlberg, Ernst and Carlsson, Lars and Gammerman, Alex},
  booktitle    = {Conformal and Probabilistic Prediction and Applications},
  pages        = {191--210},
  year         = {2021},
  organization = {PMLR}
}

@inproceedings{volkhonskiy2017inductive,
  title        = {Inductive conformal martingales for change-point detection},
  author       = {Volkhonskiy, Denis and Burnaev, Evgeny and Nouretdinov, Ilia and Gammerman, Alexander and Vovk, Vladimir},
  booktitle    = {Conformal and Probabilistic Prediction and Applications},
  pages        = {132--153},
  year         = {2017},
  organization = {PMLR}
}

@inproceedings{nouretdinov2021conformal,
  title        = {Conformal changepoint detection in continuous model situations},
  author       = {Nouretdinov, Ilia and Vovk, Vladimir and Gammerman, Alex},
  booktitle    = {Conformal and Probabilistic Prediction and Applications},
  pages        = {300--302},
  year         = {2021},
  organization = {Proceedings of Machine Learning Research}
}

@article{shin2022detectors,
  author    = {Jaehyeok Shin and Aaditya Ramdas and Alessandro Rinaldo},
  title     = {E-detectors: A Nonparametric Framework for Sequential Change Detection},
  journal   = {The New England J of Stat. in Data Sci.},
  volume    = {2},
  number    = {2},
  year      = {2023},
  pages     = {229--260},
  issn      = {2693-7166},
  publisher = {New England Statistical Society}
}

@article{angelopoulos2024theoretical,
  title   = {Theoretical foundations of conformal prediction},
  author  = {Angelopoulos, Anastasios N and Barber, Rina Foygel and Bates, Stephen},
  journal = {arXiv preprint arXiv:2411.11824},
  year    = {2024}
}

@article{scott1979optimal,
  title     = {On optimal and data-based histograms},
  author    = {Scott, David W},
  journal   = {Biometrika},
  volume    = {66},
  number    = {3},
  pages     = {605--610},
  year      = {1979},
  publisher = {Oxford University Press}
}

@article{deng2012mnist,
  author  = {Deng, Li},
  year    = {2012},
  month   = {11},
  pages   = {141-142},
  title   = {The {MNIST} Database of Handwritten Digit Images for Machine Learning Research},
  volume  = {29},
  journal = {Signal Processing Magazine, IEEE}
}

@inproceedings{socher2013recursive,
  title     = {Recursive deep models for semantic compositionality over a sentiment treebank},
  author    = {Socher, Richard and Perelygin, Alex and Wu, Jean and Chuang, Jason and Manning, Christopher D and Ng, Andrew Y and Potts, Christopher},
  booktitle = {Proceedings of the 2013 Conference on Empirical Methods in Natural Language Processing},
  pages     = {1631--1642},
  year      = {2013}
}

@article{sanh2019distilbert,
  title   = {{DistilBERT}, a distilled version of {BERT}: smaller, faster, cheaper and lighter},
  author  = {Sanh, Victor and Debut, Lysandre and Chaumond, Julien and Wolf, Thomas},
  journal = {arXiv preprint arXiv:1910.01108},
  year    = {2019}
}

@article{brockwell2007universal,
  title     = {Universal residuals: A multivariate transformation},
  author    = {Brockwell, A. E.},
  journal   = {Statistics \& Probability Letters},
  volume    = {77},
  number    = {14},
  pages     = {1473--1478},
  year      = {2007},
  publisher = {Elsevier}
}

@article{kolmogorov1933sulla,
  title   = {Sulla determinazione empirica di una legge didistribuzione},
  author  = {Kolmogorov, Andrey},
  journal = {Giorn Dell'inst Ital Degli Att},
  volume  = {4},
  pages   = {89--91},
  year    = {1933}
}

@book{lehmann2005testing,
  title     = {Testing statistical hypotheses},
  author    = {Lehmann, Erich Leo and Romano, Joseph P},
  year      = {2005},
  publisher = {Springer}
}

@inproceedings{anguita2013public,
  title={A public domain dataset for human activity recognition using smartphones.},
  author={Anguita, Davide and Ghio, Alessandro and Oneto, Luca and Parra, Xavier and Reyes-Ortiz, Jorge Luis and others},
  booktitle={European Symposium on Artificial Neural Networks},
  year={2013}
}

@article{krizhevsky2009learning,
  title     = {Learning multiple layers of features from tiny images},
  author    = {Krizhevsky, Alex},
  year      = {2009},
  publisher = {Toronto, ON, Canada}
}

@article{verzelen2023optimal,
  title     = {Optimal change-point detection and localization},
  author    = {Verzelen, Nicolas and Fromont, Magalie and Lerasle, Matthieu and Reynaud-Bouret, Patricia},
  journal   = {The Annals of Statistics},
  volume    = {51},
  number    = {4},
  pages     = {1586--1610},
  year      = {2023},
  publisher = {Institute of Mathematical Statistics}
}

@article{cho2022bootstrap,
  title     = {Bootstrap confidence intervals for multiple change points based on moving sum procedures},
  author    = {Cho, Haeran and Kirch, Claudia},
  journal   = {Computational Statistics \& Data Analysis},
  volume    = {175},
  pages     = {107552},
  year      = {2022},
  publisher = {Elsevier}
}

@article{frick2014multiscale,
  title     = {Multiscale change point inference},
  author    = {Frick, Klaus and Munk, Axel and Sieling, Hannes},
  journal   = {Journal of the Royal Statistical Society Series B: Statistical Methodology},
  volume    = {76},
  number    = {3},
  pages     = {495--580},
  year      = {2014},
  publisher = {Oxford University Press}
}

@article{xu2024change,
  title     = {Change-point inference in high-dimensional regression models under temporal dependence},
  author    = {Xu, Haotian and Wang, Daren and Zhao, Zifeng and Yu, Yi},
  journal   = {The Annals of Statistics},
  volume    = {52},
  number    = {3},
  pages     = {999--1026},
  year      = {2024},
  publisher = {Institute of Mathematical Statistics}
}

@article{fotopoulos2010exact,
  title  = {{Exact asymptotic distribution of change-point MLE for change in the mean of Gaussian sequences}},
  author = {Fotopoulos, Stergios B and Jandhyala, Venkata K and Khapalova, Elena},
  year   = {2010}
}

@article{saha2025post,
  title   = {Post-detection inference for sequential changepoint localization},
  author  = {Saha, Aytijhya and Ramdas, Aaditya},
  journal = {arXiv preprint arXiv:2502.06096},
  year    = {2025}
}

@article{anastasiou2022detecting,
  title     = {Detecting multiple generalized change-points by isolating single ones},
  author    = {Anastasiou, Andreas and Fryzlewicz, Piotr},
  journal   = {Metrika},
  volume    = {85},
  number    = {2},
  pages     = {141--174},
  year      = {2022},
  publisher = {Springer}
}

@article{bhattacharya2025theoretical,
  title   = {Theoretical guarantees for change localization using conformal p-values},
  author  = {Bhattacharya, Swapnaneel and Ramdas, Aaditya},
  journal = {arXiv preprint arXiv:2510.08749},
  year    = {2025}
}

@inproceedings{harchaoui2007retrospective,
  title        = {Retrospective mutiple change-point estimation with kernels},
  author       = {Harchaoui, Zaid and Capp{\'e}, Olivier},
  booktitle    = {2007 IEEE/SP 14th Workshop on Statistical Signal Processing},
  pages        = {768--772},
  year         = {2007},
  organization = {IEEE}
}

@article{arlot2019kernel,
  title   = {A kernel multiple change-point algorithm via model selection},
  author  = {Arlot, Sylvain and Celisse, Alain and Harchaoui, Zaid},
  journal = {Journal of Machine Learning Research},
  volume  = {20},
  number  = {162},
  pages   = {1--56},
  year    = {2019}
}

@article{diaz2025consistent,
  title   = {Consistent Kernel Change-Point Detection under m-Dependence for Text Segmentation},
  author  = {Diaz-Rodriguez, Jairo and Jia, Mumin},
  journal = {arXiv preprint arXiv:2510.03437},
  year    = {2025}
}

@article{siegmund1988confidence,
  title     = {Confidence sets in change-point problems},
  author    = {Siegmund, David},
  journal   = {International Statistical Review},
  pages     = {31--48},
  year      = {1988},
  publisher = {JSTOR}
}

@article{carlstein1988nonparametric,
  title={Nonparametric change-point estimation},
  author={Carlstein, Edward},
  journal={The Annals of Statistics},
  volume={16},
  number={1},
  pages={188--197},
  year={1988},
  publisher={Institute of Mathematical Statistics}
}

@article{dumbgen1991asymptotic,
  title={The asymptotic behavior of some nonparametric change-point estimators},
  author={D{\"u}mbgen, Lutz},
  journal={The Annals of Statistics},
  pages={1471--1495},
  year={1991},
  publisher={JSTOR}
}

@article{lee1996nonparametric,
  title={Nonparametric multiple change-point estimators},
  author={Lee, Chung-Bow},
  journal={Statistics \& Probability Letters},
  volume={27},
  number={4},
  pages={295--304},
  year={1996},
  publisher={Elsevier}
}

@article{zou2014nonparametric,
  title={Nonparametric maximum likelihood approach to multiple change-point problems},
  author={Changliang Zou and Guosheng Yin and Long Feng and Zhaojun Wang},
  journal={Annals of Statistics},
  year={2014},
  volume={42},
  pages={970-1002}
}

%------------------------------------------------------------
% APPENDICES
%------------------------------------------------------------
\newpage
\begin{appendices}
    \crefalias{section}{appendix}
    \section{MNIST digit change} \label{app:mnist-digitchange}

\par In this section, we consider a simulation of a digit change from the MNIST handwritten digit dataset \citep{deng2012mnist}. Suppose that we observe data of length $n = 1000$ and the changepoint is $\xi = 400$;  the pre-change class $\P_0$ consists of i.i.d. draws from the set of handwritten ``3'' digits and the post-change class $\P_1$ consists of i.i.d. draws from the set of handwritten ``7'' digits (\Cref{fig:mnist-sample}).

\image{0.45}{mnist-sample}{Partial sample of MNIST digit change from ``3'' to ``7'' with a changepoint at $\xi = 400$.}

\par As described in \Cref{sec:score-function}, we can use a pretrained multi-class classifier to estimate the likelihood ratio using the data before and after $t$ (for each $t \in [n-1]$). In this case, we use a simple convolutional neural network; see \Cref{app:cnn} for details about how the network was trained.

\par We can then use the \mcp{} algorithm and \Cref{alg:empirical-test} to produce a confidence interval. In this case, the left and right p-values are independent, since the left score function $s_t^{(0)}$ only depends on data to the left of $t$ and the right score function $s_t^{(1)}$ only depends on data to the right of $t$. Therefore, we use the minimum method to combine the p-values, plotting the resulting p-values $(p_t)_{t=1}^{n-1}$ in \Cref{fig:mnist-pvalues}.

\image{0.5}{mnist-pvalues}{p-values for MNIST digit change at $\xi = 400$ using a pre-trained digit classifier. The dashed red line indicates the true changepoint, and the region on the horizontal axis where the p-values lie above the dotted green line ($\alpha = 0.05$) corresponds to our 95\% confidence set. The point estimator is $\hat{\xi} = 403$, which is close to the true $\xi = 400$.}

\par In particular, the resulting confidence set is $[374, 423]$, which is completely nontrivial and obtained only using a classifier learned on the entire MNIST dataset. Note also that our confidence sets can be significantly tightened if we are given access to certified pre-change or post-change samples; see \Cref{app:certified} for related experiments.

\section{Using certified data to tighten confidence sets} \label{app:certified}

Consider the MNIST digit change experiment of \Cref{app:mnist-digitchange} but suppose we had additional certified pre-change and post-change samples, which we add to the left and right samples respectively. In this case, we are able to obtain better results.

\imagesbs{0.4}{mnist-pvalues-left-calibrated}{left-side calibration}{mnist-pvalues-calibrated}{two-sided calibration}{p-values for an MNIST digit change at $\xi = 400$ using 100 certified calibration samples (a) on the left side only and (b) on each side. The dashed red line indicates the true changepoint, and the region on the horizontal axis where the p-values lie above the dotted green line ($\alpha = 0.05$) corresponds to our 95\% confidence set. The point estimators are (a) $\hat{\xi} = 393$ and (b) $\hat{\xi} = 405$, both of which are close to the true $\xi = 400$.}

When we only have additional pre-change samples, we get the confidence interval $[384, 407]$, so the additional pre-change samples allow us to tighten our confidence interval. We show this confidence set in \Cref{fig:mnist-pvalues-left-calibrated}. If we assume we have access to 100 pre-change and post-change samples (i.e., we can learn the nature of the change that will happen), we similarly obtain the tighter confidence interval $[390, 412]$, shown in \Cref{fig:mnist-pvalues-calibrated}.

\section{Comparison of combining functions} \label{app:combining}

In this section, we compare empirically between different combining functions. We consider a dataset of size $n = 100$ containing a change from $\mathcal{N}(-1, 1)$ to $\mathcal{N}(1, 1)$ at $\xi = 40$. We apply the \mcp{} algorithm to this dataset using the minimum, Fisher, and Bonferroni combining methods outlined in \Cref{sec:combine-pvalues} with the oracle and KDE score functions used in \Cref{sec:gaussian-meanchange}, in order to construct a 95\% confidence set. Since the KDE score function is adaptive and thereby induces dependence between the left and right p-values, our theoretical results are technically only valid if one uses the Bonferroni combining method. However, we observe that in practice, the choice of combining method has very little effect on the resulting coverage and width of our confidence sets. We display the results in \Cref{tab:combining}.

\begin{table}[htbp]
    \centering
    \begin{tabular}{|c|c|c|c|c|c|}
        \hline
        Method                  & Score function              & Avg. width & Empirical coverage & Bias    & Mean abs. dev. \\
        \hline
        \multirow{2}{*}{Minimum} & Oracle      & $30.8_{\mathrm{se:}\, 10.5}$      & $0.95_{\pm 0.03}$  & $-0.5_{\mathrm{se:}\, 6.8}$ & $4.9_{\mathrm{se:}\, 4.7}$           \\
        \cline{2-6}
                             & KDE      & $33.1_{\mathrm{se:}\, 12.3}$       & $0.95_{\pm 0.03}$  & $1.4_{\mathrm{se:}\, 14.8}$ & $7.7_{\mathrm{se:}\, 12.7}$           \\
        \hline
        \multirow{2}{*}{Fisher} & Oracle      & $31.9_{\mathrm{se:}\, 11.3}$      & $0.95_{\pm 0.03}$  & $-0.5_{\mathrm{se:}\, 6.1}$ & $4.5_{\mathrm{se:}\, 4.2}$           \\
        \cline{2-6}
                             & KDE      & $32.9_{\mathrm{se:}\, 12.7}$      & $0.95_{\pm 0.03}$  & $0.1_{\mathrm{se:}\, 13.9}$ & $7.5_{\mathrm{se:}\, 11.7}$           \\
        \hline
        \multirow{2}{*}{Bonferroni} & Oracle      & $31.4_{\mathrm{se:}\, 10.7}$      & $0.95_{\pm 0.03}$  & $-2.8_{\mathrm{se:}\, 6.8}$ & $5.7_{\mathrm{se:}\, 4.6}$           \\
        \cline{2-6}
                             & KDE      & $33.6_{\mathrm{se:}\, 13.0}$      & $0.95_{\pm 0.03}$  & $1.0_{\mathrm{se:}\, 14.4}$ & $7.7_{\mathrm{se:}\, 12.2}$           \\
        \hline
    \end{tabular}
    \vspace{.2cm}
    \caption{Average width and coverage of confidence sets for Gaussian mean change, as well as the bias ($\E[\hat{\xi} - \xi]$) and mean absolute deviation ($\E[\lvert \hat{\xi} - \xi \rvert]$) of the point estimator over 1000 trials. The length of the data is $n = 100$ and we observe a change from $\mathcal{N}(-1, 1)$ to $\mathcal{N}(1, 1)$ at $\xi = 40$. We run the \mcp{} algorithm using the oracle and KDE score functions (\Cref{sec:combine-pvalues}) to construct a 95\% confidence set for the changepoint. Note that the choice of combining function neither significantly affects the width and coverage of the resulting confidence sets, nor the bias and mean absolute deviation of our point estimator. Binomial errors are reported for empirical coverage (95\%) and standard errors are reported for all other values.}
    \label{tab:combining}
    \vspace{-.3cm}
\end{table}

\section{Extensions of the \mcp{} algorithm}
\label{app:extensions}

Here, we describe several alternatives to the \mcp{} algorithm described in \Cref{sec:mcp} and \Cref{sec:confidence-sets}.

\subsection{Alternative tests}

First, we detail several alternatives to the empirical test detailed in \Cref{sec:testing} for constructing the left and right p-values.

\paragraph{Asymptotic KS test}

We can use the asymptotic distribution of $W_t$ to construct confidence sets for the changepoint. This method is valid when $t$ and $n - t$ are both large. Recall that $\varphi_t = B_t - t B_1$ (where $(B_t)_{t \geq 0}$ is a standard Brownian motion on $\R$) is the Brownian bridge on $\R$. Define $F_\varphi(z) \coloneqq \P\left( \sup_{t \in [0, 1]}\; \lvert \varphi_t \rvert \leq z \right)$, where $(\varphi_t)_{t \geq 0}$ is the Brownian bridge. Then, by a central limit theorem due to \citet{donsker1951invariance}, we can compute the asymptotically valid p-values
\begin{align*}
    p_t^\mathrm{left} = 1 - F_\varphi(W_t^{(0)}),
    \qquad p_t^\mathrm{right} = 1 - F_\varphi(W_t^{(1)}).
\end{align*}

\par Note that in general, one can use the approximation $p_t^\mathrm{left} \approx 2 \exp(-2\, (W_t^{(0)})^2)$, which is obtained by truncating the series expansion of the limiting distribution (\citet{kolmogorov1933sulla}). In practice, we can use the empirical test to test $\mathcal{H}_{0t}$ when $t$ and $n - t$ are small, and use the asymptotic test when $t$ and $n - t$ are large. Next, we will discuss how to combine the left and right p-values from \Cref{sec:testing} into a single p-value for $\mathcal{H}_{0t}$.

\par If the asymptotic test is used, we have the following asymptotic coverage guarantee for the \mcp{} algorithm, analogous to the one given in \Cref{sec:testing}.

\begin{theorem}[Asymptotic coverage guarantee (asymptotic KS test)]
    \label{thm:coverage-asymp-ks}
    Consider the setting of \Cref{thm:coverage-empirical} with a changepoint $\xi = \lfloor \gamma n \rfloor$ for some $\gamma \in (0, 1)$, but where $\mathcal{C}_{1-\alpha}$ is instead constructed using the asymptotic test described in \Cref{sec:testing}. Then, the coverage of $\mathcal{C}_{1-\alpha}$ is asymptotically at least $1 - \alpha$:
    \begin{align*}
        \liminf_{n \to \infty}\, \P\left( \xi \in \mathcal{C}_{1-\alpha} \right) \geq 1 - \alpha.
    \end{align*}
\end{theorem}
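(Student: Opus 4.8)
The plan is to prove the equivalent bound $\limsup_{n\to\infty}\P(p_\xi \le \alpha) \le \alpha$, since $\xi \in \mathcal{C}_{1-\alpha}$ if and only if $p_\xi > \alpha$ by the Neyman construction in \Cref{alg:mcp}. I would first observe that for $\gamma \in (0,1)$ we have $1 \le \xi = \lfloor \gamma n\rfloor < n$ for all $n$ large enough, so the true changepoint is eventually handled by the generic branch of \Cref{alg:mcp} and not the special $t = n$ case of the remark; it therefore suffices to control the per-candidate p-value $p_\xi$ built from $(W_\xi^{(0)}, W_\xi^{(1)})$ through the asymptotic KS test $p_t^{\mathrm{left}} = 1 - F_\varphi(W_t^{(0)})$, $p_t^{\mathrm{right}} = 1 - F_\varphi(W_t^{(1)})$, together with the chosen combining rule.

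Next I would reuse the finite-sample distributional fact underlying \Cref{thm:coverage-empirical}: under $\mathcal{H}_{0\xi}$, exchangeability of $\P_0$ makes the nested score families $(\kappa_{rj}^{(\xi)})_{j=1}^r$ for $r \le \xi$ exchangeable in $j$, so the sequential-rank p-values $(p_r^{(\xi)})_{r=1}^\xi$ are i.i.d.\ $\mathrm{Unif}(0,1)$; symmetrically, exchangeability of $\P_1$ makes $(p_r^{(\xi)})_{r=\xi+1}^n$ i.i.d.\ $\mathrm{Unif}(0,1)$, and the two blocks are independent since $\P = \P_0 \times \P_1$. Thus $\hat F_0$ and $\hat F_1$ are empirical CDFs of $\xi$ and $n - \xi$ i.i.d.\ uniforms, and because $\gamma \in (0,1)$ forces both $\xi \to \infty$ and $n - \xi \to \infty$, Donsker's theorem \citep{donsker1951invariance} gives $W_\xi^{(0)} \overset{d}{\longrightarrow} K$ and $W_\xi^{(1)} \overset{d}{\longrightarrow} K$, where $K \coloneqq \sup_{z \in [0,1]} |\varphi_z|$ has the continuous Kolmogorov CDF $F_\varphi$ \citep{kolmogorov1933sulla}.

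I would then push these limits through the asymptotic test and the combining step. By the continuous mapping theorem and the probability integral transform (valid since $F_\varphi$ is continuous), $p_\xi^{\mathrm{left}} = 1 - F_\varphi(W_\xi^{(0)}) \overset{d}{\longrightarrow} \mathrm{Unif}(0,1)$ and likewise $p_\xi^{\mathrm{right}} \overset{d}{\longrightarrow} \mathrm{Unif}(0,1)$. For the Bonferroni rule this is enough: a union bound gives $\P(p_\xi \le \alpha) \le \P(p_\xi^{\mathrm{left}} \le \alpha/2) + \P(p_\xi^{\mathrm{right}} \le \alpha/2) \to \alpha$, hence $\limsup_n \P(p_\xi \le \alpha) \le \alpha$. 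For the minimum or Fisher rules (which require non-adaptive score functions), the left statistics depend only on $(X_1,\dots,X_\xi)$ and the right statistics only on $(X_{\xi+1},\dots,X_n)$, so $W_\xi^{(0)} \perp W_\xi^{(1)}$ for every $n$; combining per-$n$ independence with the marginal limits yields that $(p_\xi^{\mathrm{left}}, p_\xi^{\mathrm{right}})$ converges in distribution to a pair of independent uniforms, and a final application of continuous mapping (using the explicit CDF of the minimum of two independent uniforms) gives $p_\xi \overset{d}{\longrightarrow} \mathrm{Unif}(0,1)$ exactly, so $\P(p_\xi \le \alpha) \to \alpha$. In every case, $\liminf_n \P(\xi \in \mathcal{C}_{1-\alpha}) = 1 - \limsup_n \P(p_\xi \le \alpha) \ge 1 - \alpha$.

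The step I expect to be the main obstacle is the transition from exact finite-sample uniformity of the per-anomaly p-values to a single, \emph{continuous} limiting law for the discrepancy statistics $W_\xi^{(0)}, W_\xi^{(1)}$: this is what simultaneously pins down why $\gamma$ must lie strictly inside $(0,1)$ (so that both halves grow) and why the integral transform $F_\varphi(K)$ is genuinely uniform in the limit rather than only stochastically larger. The remaining pieces — discarding the $t = n$ case, the independence of the two halves under non-adaptive scores, and the asymptotics of the combining functions — are routine.
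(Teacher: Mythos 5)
Your proposal is correct and follows essentially the same route as the paper: the paper's proof simply declares the argument ``completely analogous'' to the finite-sample case, with validity now resting on Donsker's theorem for $\sqrt{t}\,\mathrm{KS}(\hat F_0,u)\overset{d}{\longrightarrow}\sup_{z\in[0,1]}|\varphi_z|$ as $t\to\infty$ and $n-t\to\infty$. You have merely filled in the details the paper leaves implicit (exact uniformity of the per-anomaly p-values, continuity of $F_\varphi$, and the case analysis over combining rules), all of which are consistent with the intended argument.
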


\paragraph{Permutation test}

Instead of using the empirical test or asymptotic KS test, we can use permutation tests to construct confidence sets for the changepoint by using the distribution of $W_t^{(0)}$ and $W_t^{(1)}$ under permutations of the data before and after $t$ respectively. Under the null hypothesis $\mathcal{H}_{0t}$, the data before and after $t$ is exchangeable, so the distribution of $(W_t^{(0)}, W_t^{(1)})$ under permutations of the data before and after $t$ should remain the same. This method is valid for all values of $t$, and we can use the following algorithm to construct confidence sets for the changepoint using permutation tests.

\begin{algorithm}[H]
    \caption{Permutation test}
    \label{alg:permutation-test}
    \KwIn{$(W_t^{(0)}, W_t^{(1)})$ (discrepancy scores), $B \in \N$ (user-chosen sample size)}
    \KwOut{$(p_t^\mathrm{left}, p_t^\mathrm{right})$ (a pair of p-values for the left and right data)}

    \For{$b \in [B]$}{
    $(X_t^{(b)})_{t=1}^n \sim \mathrm{Unif}([0, 1]^n)$

    Compute $(W_t^{(0, b)})_{t=1}^{n-1}$ and $(W_t^{(1, b)})_{t=1}^{n-1}$ using \Cref{line:w-start} to \Cref{line:w-end} of \Cref{alg:mcp} on permuted data $(X_t^{(b)})_{t=1}^n$
    }

    Draw $\theta_0,\, \theta_1 \sim \mathrm{Unif}(0, 1)$

    $p_t^\mathrm{left} \gets \frac{1}{B+1} \left( \theta_0 + \sum_{b=1}^B (\indicator_{W_t^{(0, b)} > W_t^{(0)}} + \theta_0\, \indicator_{W_t^{(0, b)} = W_t^{(0)}}) \right)$

    $p_t^\mathrm{right} \gets \frac{1}{B+1} \left( \theta_1 + \sum_{b=1}^B (\indicator_{W_t^{(1, b)} > W_t^{(1)}} + \theta_1\, \indicator_{W_t^{(1, b)} = W_t^{(1)}}) \right)$

    \Return{$(p_t^\mathrm{left}, p_t^\mathrm{right})$}
\end{algorithm}

In practice, we have observed that permutation testing performs very similarly to the empirical test, but it is much slower to run. Therefore, we recommend using the empirical test and asymptotic test over the permutation test. In addition, the empirical test can be run once to obtain empirical distributions for the discrepancy scores, and those distributions can be reused across datasets; this is not possible with the permutation test, which must be run on each dataset.

\subsection{Alternatives to the Kolmogorov-Smirnov statistic}

\par Instead of using the Kolmogorov-Smirnov statistic to measure the discrepancy between the normalized ranks and the uniform distribution, we can use other discrepancy measures such as the Cram\'er-von Mises statistic, the Anderson-Darling statistic, the Kuiper statistic, or the Wasserstein $p$-distance. These statistics all give rise to valid hypothesis tests of $\mathcal{H}_{0t}$, and we can use them to construct confidence sets for the changepoint using the same methods described in \Cref{sec:confidence-sets} using the \mcp{} algorithm. However, in our experiments, we have not observed a significant difference in performance between the Kolmogorov-Smirnov statistic and these other statistics.

\section{Confidence sets with no changepoint} \label{app:no-change}

When there is no changepoint, \Cref{alg:mcp} tends to produce very wide confidence sets, but which typically include $n$ (meaning that the practitioner can correctly conclude that there is a possibility that there is no changepoint in the dataset). We summarize these findings in \Cref{tab:no-change}.

\begin{table}[htbp]
    \centering
    \begin{tabular}{|c|c|c|c|c|c|}
        \hline
        $\alpha$                                & Avg. width & Empirical coverage \\
        \hline
        0.05       & $476.0_{\mathrm{se:}\, 53.9}$      & $0.95_{\pm 0.03}$           \\
        \hline
        0.5      & $250.4_{\mathrm{se:}\, 136.5}$      & $0.51_{\pm 0.03}$           \\
        \hline
    \end{tabular}
    \vspace{.2cm}
    \caption{Average width and coverage of confidence sets for Gaussian mean change when there is no changepoint. The length of the data is $n = 500$ and all data is from $\mathcal{N}(-1, 1)$. We run the \mcp{} algorithm using the oracle score function (\Cref{sec:combine-pvalues}) assuming that there is a change from $\mathcal{N}(-1, 1)$ to $\mathcal{N}(1, 1)$ in order to construct a $1 - \alpha$ confidence set for the changepoint. We note that the resulting confidence sets are wide, but typically have the correct coverage. Binomial errors are reported for empirical coverage and standard errors are reported for the width.}
    \label{tab:no-change}
    \vspace{-.3cm}
\end{table}

\section{Proofs of main results} \label{app:proofs}

In this section, we present proofs of our main results. For an alternative proof technique, see \citet{vovk2003testing, angelopoulos2024theoretical}. We begin with a useful lemma (for more on this lemma, see \citet{brockwell2007universal}).

\begin{lemma}[Randomized probability integral transform] \label{lem:randomized-pit}
    Suppose $X$ is a real-valued random variable with cdf $F_X$ and let $V \sim \mathrm{Unif}(0, 1)$ be drawn independently of $X$. If we define
    \begin{align*}
        U = \lim_{y \uparrow X} F_X(y) + V \left( F_X(X) - \lim_{y \uparrow X} F_X(y) \right),
    \end{align*}
    then $U \sim \mathrm{Unif}(0, 1)$. Here, $U$ is called the randomized probability integral transform of $X$.
\end{lemma}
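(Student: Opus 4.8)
The plan is to show directly that $\P(U \le u) = u$ for every $u \in (0,1)$, which suffices since $U$ visibly takes values in $[0,1]$. Throughout I would write $F^-(x) := \lim_{y \uparrow x} F_X(y)$ and $\Delta(x) := F_X(x) - F^-(x) \ge 0$ for the jump of $F_X$ at $x$, so that $U = F^-(X) + V\,\Delta(X)$ and, because $V \in [0,1]$ and $\Delta(X) \ge 0$, one always has $F^-(X) \le U \le F_X(X)$. I would then fix $u \in (0,1)$ and introduce the quantile $q := \inf\{x \in \R : F_X(x) \ge u\}$; right-continuity of $F_X$ and the definition of the infimum give $F^-(q) \le u \le F_X(q)$, equivalently $0 \le u - F^-(q) \le \Delta(q)$.

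The main step is to split the event $\{U \le u\}$ according to the position of $X$ relative to $q$. On $\{X < q\}$ one has $F_X(X) < u$ (everything below $q$ fails $F_X(\cdot) \ge u$), hence $U \le F_X(X) < u$; on $\{X > q\}$ one has $F^-(X) \ge F_X(q) \ge u$, hence $U \ge F^-(X) \ge u$, and the sub-event where additionally $U \le u$ forces $F^-(X) = u$ and $V\Delta(X) = 0$, which has probability zero by independence of $V \sim \mathrm{Unif}(0,1)$ from $X$. This leaves the atom $\{X = q\}$, where $U = F^-(q) + V\Delta(q)$; if $\Delta(q) > 0$, independence of $V$ from $X$ gives $\P(U \le u \mid X = q) = (u - F^-(q))/\Delta(q)$, a number in $[0,1]$ by the bound above. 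Assembling these, and using $\P(X < q) = F^-(q)$ and $\P(X = q) = \Delta(q)$,
\begin{align*}
    \P(U \le u) = \P(X < q) + \P(X = q)\cdot\frac{u - F^-(q)}{\Delta(q)} = F^-(q) + (u - F^-(q)) = u.
\end{align*}
The degenerate case $\Delta(q) = 0$ I would dispatch separately: then $\P(X = q) = 0$ and $F^-(q) = F_X(q)$, so the sandwich $F^-(q) \le u \le F_X(q)$ forces $F^-(q) = u$, and directly $\P(U \le u) = \P(X < q) = F^-(q) = u$.

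I expect the only real obstacle to be the careful bookkeeping around the atom $q$ and the endpoints of $(0,1)$: one must check that conditionally on $X = q$ the variable $U$ is genuinely uniform on $[F^-(q), F_X(q)]$, that the ratio $(u - F^-(q))/\Delta(q)$ stays in $[0,1]$, and that the boundary events $\{U = u\}$ contribute nothing. An alternative route, which sidesteps some of the casework, is a coupling argument: take $U^\star \sim \mathrm{Unif}(0,1)$, set $X^\star := \inf\{x : F_X(x) \ge U^\star\}$, observe that $X^\star$ has cdf $F_X$ and that conditionally on $X^\star = x$ the value $U^\star$ is uniform on $[F^-(x), F_X(x)]$ --- which is exactly the conditional law of $U$ given $X = x$ in the statement --- so $(X, U)$ and $(X^\star, U^\star)$ are equal in distribution, and in particular $U \sim \mathrm{Unif}(0,1)$. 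Either way the argument is short, and I would present the direct computation as the main proof.
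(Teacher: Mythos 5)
Your proposal is correct and follows essentially the same route as the paper's proof: both decompose $\{U \le u\}$ according to whether $X$ is below, at, or above the quantile $q = F_X^{-1}(u)$, show the three pieces contribute $F^-(q)$, $u - F^-(q)$, and $0$ respectively, and sum. The only minor point worth tightening is the $\{X > q\}$ case, where $\{U \le u\}$ also includes the event $\{\Delta(X) = 0,\ F^-(X) = u\}$ not involving $V$; as in the paper, this has probability zero because $F_X$ is constant (equal to $u$) on that set.
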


\begin{proof}[Proof of \Cref{lem:randomized-pit}]
    Let $F_X^{-1}(x) = \inf\{ z \in \R : F_X(z) \geq x \} $ denote the quantile transformation of $X$. We then fix $u \in [0, 1]$ and decompose
    \begin{align} \label{eq:unif-decomp}
        \P(U \leq u)
        = \P(U \leq u,\, X < F_X^{-1}(u)) + \P(U \leq u,\, X = F_X^{-1}(u)) + \P(U \leq u,\, X > F_X^{-1}(u)).
    \end{align}
    We proceed by estimating these three quantities. Note that for any $x < F_X^{-1}(u)$, we have
    \begin{align*}
        \lim_{y \uparrow x} F_X(y) \leq F_X(x) < u.
    \end{align*}
    This means that if $X < F^{-1}(u)$ we have
    \begin{align*}
        U
         & =  \lim_{y \uparrow X} F_X(y) + V \left( F_X(X) - \lim_{y \uparrow X} F_X(y) \right) \\
         & < \lim_{y \uparrow X} F_X(y) + V \left( u - \lim_{y \uparrow X} F_X(y) \right)       \\
         & = (1 - V) \lim_{y \uparrow X} F_X(y) + V u                                           \\
         & < (1 - V) u + V u                                                                    \\
         & = u.
    \end{align*}
    Therefore, the first term in \Cref{eq:unif-decomp} is
    \begin{align*}
        \P(U \leq u,\, X < F_X^{-1}(u))
        = \P(X < F_X^{-1}(u))
        = \lim_{y \uparrow F_X^{-1}(u)} F_X(y).
    \end{align*}
    Next, suppose that $X = F_X^{-1}(u)$. If $F_X$ is continuous at $X$, then it is obvious that $U = u$. If not, then we have
    \begin{align*}
        U
        = \lim_{y \uparrow X} F_X(y) + V \left( F_X(X) - \lim_{y \uparrow X} F_X(y) \right)
        \leq u
        \iff V
        \leq \frac{u - \lim_{y \uparrow X} F_X(y)}{F_X(X) - \lim_{y \uparrow X} F_X(y)}.
    \end{align*}
    Hence, we find that
    \begin{align*}
         & \P\left( U \leq u,\, X = F_X^{-1}(u) \right)                                                                                                                                                                       \\
         & \quad = \P(X = F_X^{-1}(u))\; \P\left( V \leq \frac{u - \lim_{y \uparrow X} F_X(y)}{F_X(X) - \lim_{y \uparrow X} F_X(y)} \givenlarge X = F_X^{-1}(u) \right)                                                       \\
         & \quad = \left( F_X(F_X^{-1}(u)) - \lim_{y \uparrow F_X^{-1}(u)} F_X(y) \right)\; \P\left( V \leq \frac{u - \lim_{y \uparrow F_X^{-1}(u)} F_X(y)}{F_X(F_X^{-1}(u)) - \lim_{y \uparrow F_X^{-1}(u)} F_X(y)} \right).
    \end{align*}
    Note that $\lim_{y \uparrow F_X^{-1}(u)} F_X(y) \leq u \leq F_X(F_X^{-1}(u))$ by definition, so since $V \sim \mathrm{Unif}(0, 1)$ we obtain
    \begin{align*}
         & \left( F_X(F_X^{-1}(u)) - \lim_{y \uparrow F_X^{-1}(u)} F_X(y) \right)\; \P\left( V \leq \frac{u - \lim_{y \uparrow F_X^{-1}(u)} F_X(y)}{F_X(F_X^{-1}(u)) - \lim_{y \uparrow F_X^{-1}(u)} F_X(y)} \right) \\
         & \quad = \left( F_X(F_X^{-1}(u)) - \lim_{y \uparrow F_X^{-1}(u)} F_X(y) \right) \left( \frac{u - \lim_{y \uparrow F_X^{-1}(u)} F_X(y)}{F_X(F_X^{-1}(u)) - \lim_{y \uparrow F_X^{-1}(u)} F_X(y)} \right)    \\
         & \quad = u - \lim_{y \uparrow F_X^{-1}(u)} F_X(y).
    \end{align*}
    Whether $F_X$ is continuous at $X$ or not, the second term in \Cref{eq:unif-decomp} is given by
    \begin{align*}
        \P\left( U \leq u,\, X = F_X^{-1}(u) \right)
        = u - \lim_{y \uparrow F_X^{-1}(u)} F_X(y).
    \end{align*}
    Finally, consider the case $X > F_X^{-1}(u)$ in \Cref{eq:unif-decomp}. By monotonicity of the cdf, we have
    \begin{align*}
        \lim_{y \uparrow X}\, F_X(y) \geq F_X(F_X^{-1}(u)).
    \end{align*}
    By definition, $F_X(F_X^{-1}(u)) \geq u$, so we see that
    \begin{align*}
        U = \lim_{y \uparrow X} F_X(y) + V \left( F_X(X) - \lim_{y \uparrow X} F_X(y) \right) \geq \lim_{y \uparrow X} F_X(y) \geq u.
    \end{align*}
    Thus, $\{ U \leq u,\, X > F_X^{-1}(u) \}$ can only happen if $U = u$. Except for the zero-probability event where $V = 0$, this also forces $X \in S \coloneqq \{ x \in \R : \lim_{y \uparrow x} F_X(y) = F_X(x) = u \}$ and because $F_X$ is constant on $S$, we find that
    \begin{align*}
        \P(U \leq u,\, X > F_X^{-1}(u)) \leq \P(X \in S) = 0.
    \end{align*}
    We deduce the result from summing the three terms in \Cref{eq:unif-decomp}.
\end{proof}

Using \Cref{lem:randomized-pit}, we can prove coverage guarantees for our confidence sets.

\begin{proof}[Proof of \Cref{thm:coverage-empirical}]
    First, we demonstrate that $p_r^{(t)}$ is always a true p-value under the null $\mathcal{H}_{0t}$ (conditional on the bag $\llbracket X_1, \dots, X_r \rrbracket$), for all $r \in [t]$. We may assume without loss of generality that $r \leq t$, since the case $t < r \leq n$ follows by a symmetric argument. Since $X_1, \dots, X_t$ are exchangeable under $\mathcal{H}_{0t}$, for any permutation $\pi : [r] \to [r]$ we have
    \begin{align*}
         & \left( \kappa_{r,\,\pi(1)}^{(t)}, \dots, \kappa_{r,\,\pi(r)}^{(t)} \right)                                                                                                                                   \\
         & \quad = \left( s_t^{(0)}(X_{\pi(1)} ; \llbracket X_1, \dots, X_r \rrbracket, (X_{t+1}, \dots, X_n)),\, \dots,\, s_t^{(0)}(X_{\pi(r)} ; \llbracket X_1, \dots, X_r \rrbracket, (X_{t+1}, \dots, X_n)) \right) \\
         & \quad = \pi(\kappa_{r1}^{(t)}, \dots, \kappa_{rr}^{(t)}).
    \end{align*}
    Hence, the scores $\kappa_{r1}^{(t)}, \dots, \kappa_{rr}^{(t)}$ are exchangeable under $\mathcal{H}_{0t}$. Now, if $\tilde{U} \sim \mathrm{Unif}([r])$, then the cdf of $X_{\tilde{U}}$ conditional on the bag of observations $\llbracket X_1, \dots, X_r \rrbracket$ is
    \begin{align*}
        F_{X_{\tilde{U}}}(x) = \frac{1}{r} \sum_{j=1}^r \indicator_{\kappa_{rj}^{(t)} \leq x}.
    \end{align*}
    Conditional on the bag of observations, $\kappa_{rr}^{(t)}$ is distributed like $X_{\tilde{U}}$, so by the randomized probability integral transform and the fact that $1 - \theta_r^{(t)} \sim \mathrm{Unif}(0, 1)$, we find that (conditional on $\llbracket X_1, \dots, X_r \rrbracket$)
    \begin{align*}
        U
        = \frac{1}{r} \sum_{j=1}^r \indicator_{\kappa_{rj}^{(t)} < \kappa_{rr}^{(t)}} + \frac{1 - \theta_r^{(t)}}{r} \sum_{j=1}^r \indicator_{\kappa_{rj}^{(t)} = \kappa_{rr}^{(t)}}
        \sim \mathrm{Unif}(0, 1).
    \end{align*}
    In particular, this means that (conditional on $\llbracket X_1, \dots, X_r \rrbracket$)
    \begin{align*}
        1 - U
         & = \frac{1}{r} \sum_{j=1}^r \indicator_{\kappa_{rj}^{(t)} \geq \kappa_{rr}^{(t)}} - \frac{1 - \theta_r^{(t)}}{r} \sum_{j=1}^r \indicator_{\kappa_{rj}^{(t)} = \kappa_{rr}^{(t)}} \\
         & = \frac{1}{r} \sum_{j=1}^r \indicator_{\kappa_{rj}^{(t)} > \kappa_{rr}^{(t)}} + \frac{\theta_r^{(t)}}{r} \sum_{j=1}^r \indicator_{\kappa_{rj}^{(t)} = \kappa_{rr}^{(t)}}        \\
         & = p_r^{(t)}
        \sim \mathrm{Unif}(0, 1).
    \end{align*}
    Integrating over $\llbracket X_1, \dots, X_r \rrbracket$ shows that $p_r^{(t)}$ is an unconditional p-value for $\mathcal{H}_{0t}$ such that $\{ p_r^{(t)} \}_{r=1}^t$ are independent \citep[Proposition 3.1]{vovk2003testing}. Of course, this means that under the null hypothesis, the discrepancy scores $((W_t^{(i)})_{i=0}^1)_{t=1}^{n-1}$ are distributed as if the data was all i.i.d. from $\mathrm{Unif}(0, 1)$. Another application of the randomized probability integral transform (\Cref{lem:randomized-pit}) gives that $p_t^{\mathrm{left}}$ and $p_t^{\mathrm{right}}$ are exactly uniformly distributed, and any of the methods in \Cref{sec:combine-pvalues} will yield a valid confidence interval by the Neyman construction.
\end{proof}

\begin{proof}[Proof of \Cref{thm:coverage-asymp-ks}]
    The proof of this fact is completely analogous to the proof of \Cref{thm:coverage-empirical}, except that the test used is only asymptotically valid as $t \to \infty$ and $n - t \to \infty$ simultaneously. Letting $\varphi_t = B_t - t B_1$ (where $(B_t)_{t \geq 0}$ denotes a standard Brownian motion on $\R$), the validity of the test follows from the following theorem of \citet{donsker1951invariance}:
    \begin{align*}
        \sqrt{t}\, \left( \sup_{z \in \R}\; \lvert \hat{F}_0(z) - u(z) \rvert \right)
        = \sqrt{t}\; \mathrm{KS}(\hat{F}_0, u)
        \overset{d}{\longrightarrow} \sup_{t \in [0, 1]}\; \lvert \varphi_t \rvert. \tag*{\qedhere}
    \end{align*}
\end{proof}

\begin{proof}[Proof of \Cref{thm:conformal-np}]
    Let $F_Y$ denote the cdf of the random variable $Y$ and let $F_Y^{-1}(y) = \inf\{ z \in \R : F_Y(z) \geq y \}$ denote the quantile transformation. Now, it's clear from definitions that
    \begin{align*}
        \E[T_n[s]]
         & = \frac{1}{n} \sum_{i=1}^n \left( \P(s(X_i) < s(X_{n+1})) + \frac{1}{2}\, \P(s(X_i) = s(X_{n+1})) \right).
    \end{align*}
    The second term does not depend on the choice of $s$, so it suffices to optimize over the first term. By the randomized probability integral transform (\Cref{lem:randomized-pit}), we obtain
    \begin{align*}
         & \P(s(X_1) < s(X_{n+1})) + \frac{1}{2}\, \P(s(X_1) = s(X_{n+1}))                                                                                                            \\
         & \quad = \P\left( \lim_{y \uparrow s(X_1)}\, F_{s(X_{n+1})}(y) + \theta_1 \left( F_{s(X_{n+1})}(s(X_1)) - \lim_{y \uparrow s(X_1)}\, F_{s(X_{n+1})}(y) \right) < U \right),
    \end{align*}
    for an independent uniform $U \sim \mathrm{Unif}(0, 1)$. Define
    \begin{align*}
        F_Y^*(s(X_1)) \coloneqq \lim_{y \uparrow s(X_1)}\, F_Y(y) + \theta_1 \left( F_Y(s(X_1)) - \lim_{y \uparrow s(X_1)}\, F_Y(y) \right).
    \end{align*}
    Let $\P(F_{s(X_{n+1})}^*(s(X_1)) < u \given U = u)$ be a regular conditional probability, which exists and is unique for Lebesgue-almost every $u \in (0, 1)$ by the disintegration of measure. Hence, the above expression equals
    \begin{align*}
        \E_U[\P(F_{s(X_{n+1})}^*(s(X_1)) < u \given U = u)]
        = \int_0^1 \P\left( F_{s(X_{n+1})}^*(s(X_1)) < u \right)\, du.
    \end{align*}
    Fix $u \in (0, 1)$ and suppose we want to test $\tilde{\mathcal{H}}_0 : X_1 \sim Q$ against $\tilde{\mathcal{H}}_1 : X_1 \sim R$, rejecting $\tilde{\mathcal{H}}_0$ whenever $F_{s(X_{n+1})}^*(s(X_1)) \leq u$. Under $\tilde{\mathcal{H}}_0$, the cdf of $s(X_1)$ is exactly $F_{s(X_{n+1})}$, so the randomized probability integral transform (\Cref{lem:randomized-pit}) gives that the Type I error of our test is $u$:
    \begin{align*}
        \P_{\tilde{\mathcal{H}_0}}\left( F_{s(X_{n+1})}^*(s(X_1)) < u \right)
        = \P_{\tilde{\mathcal{H}_0}}\left( F_{s(X_1)}^*(s(X_1)) < u \right)
        = u.
    \end{align*}
    The power of our test is $\P_{\tilde{\mathcal{H}_1}}\left( F_{s(X_{n+1})}^*(s(X_1)) < u \right)$. By the generalized Neyman-Pearson lemma (see, for example, Theorem 3.2.1 and Section 5.9 of \citet{lehmann2005testing}), the power is maximized by the choice $s = s^*$. Integrating the bound from 0 to 1 gives the desired result.
    \par The usual Neyman-Pearson lemma \citep[Theorem 3.2.1 (ii)]{lehmann2005testing} states that any test $\phi$ of exact size $\alpha$ of the form $\phi(z) = \indicator_{s^*(z) \geq t_\alpha}$ is most powerful for testing $\mathcal{H}_0$ against $\mathcal{H}_1$ at level $\alpha$. Hence, we deduce that the choice $s^*$ maximizes the power of any level $\alpha$ test of $\mathcal{H}_0$ against $\mathcal{H}_1$. Furthermore, we know that $\phi_\alpha[s]$ is an exact test at size $\alpha$ ($\E[\phi_\alpha[s]] = \alpha$ when $Q = R$) which is independent of the choice of score $s$ \citep[Lemma 9.3]{angelopoulos2024theoretical}. Putting the pieces together, the last statement of the theorem follows.
\end{proof}

\section{The log-optimal e-variable uses the likelihood ratio score}\label{sec:e-value}

While the reasoning in \Cref{thm:conformal-np} yields that conformal \emph{p-values} are optimized by the likelihood ratio score, it turns out to also be true that conformal \emph{e-values} are also optimized by the same score. Indeed, in order to test exchangeability of $X_1,\dots,X_{n+1}$ against the alternative that their joint distribution is given by $R^n \times Q$,  one can show \citep[Section 6.7.7]{ramdas2024hypothesis} that the log-optimal e-variable is given by
\[
    \frac{q(X_{n+1})\prod_{i=1}^{n} r(X_i)}{\tfrac{1}{n+1}\sum_{i=1}^{n+1} q(X_i)\prod_{j \neq i} r(X_j) }.
\]
Dividing throughout by $\prod_{i=1}^{n+1} r(X_i)$, the above e-variable becomes
\[
    \frac{q(X_{n+1}) / r(X_{n+1})}{\tfrac{1}{n+1}\sum_{i=1}^{n+1} q(X_i) / r(X_i)}.
\]
In fact, every e-variable for testing exchangeability must be of the form
\(
\frac{s(\bf X)}{\sum_{\pi} s(\bf X_\pi)},
\)
where the sum is taken over all $(n+1)!$ permutations $\pi$ and ${\bf X} = (X_1,\dots,X_n)$ and ${\bf X_\pi} = (X_{\pi(1)},\dots,X_{\pi(n)})$ for a positive score function $s$; thus, we see once again the the log-optimal e-variable is obtained using the likelihood ratio score
\[
    s({\bf X}) = q(X_{n+1})/r(X_{n+1}).
\]

\section{Convolutional neural network architecture} \label{app:cnn}

We trained a convolutional neural network from scratch on the MNIST handwritten digit dataset, using the following architecture:

\begin{itemize}
    \item Convolutional layer (1 input channel, 32 output channels, $3 \times 3$ kernel, stride 1)
    \item ReLU activation
    \item Convolutional layer (32 input channels, 64 output channels, $3 \times 3$ kernel, stride 1)
    \item ReLU activation
    \item Max pooling layer ($2 \times 2$)
    \item Dropout ($p=0.25$)
    \item Flattening layer
    \item Linear layer (output size 128)
    \item ReLU activation
    \item Dropout ($p=0.5$)
    \item Linear layer (output size 10)
\end{itemize}

We train for one epoch using the Adam optimizer and cross-entropy loss, and achieve 98.63\% accuracy on the test dataset. The entire training process takes approximately one minute (using the CPU) on a MacBook Pro with an M1 Pro processor.

\section{Additional methods for choosing the score function}
\label{app:score}

In this section, we detail several additional methods by which the score function can be chosen in the \mcp{} algorithm. Suppose that $\P_0 = \otimes_{t=1}^\xi\; \P_0^*$ and $\P_1 = \otimes_{t=\xi+1}^n\; \P_1^*$, where $\P_0^*$ and $\P_1^*$ have densities $f_0$ and $f_1$ respectively; here are several ways to choose the score function:

\begin{itemize}
    \item \textbf{Likelihood ratio}: If the pre-change and post-change distributions are known, the likelihood ratio is a powerful score function (motivated by \Cref{thm:conformal-np}):
          \begin{align*}
              s_{rt}^{(0)}(x_r ; \llbracket x_1, \dots, x_r \rrbracket, (x_{t+1}, \dots, x_n))
               & = \frac{f_1(x_r)}{f_0(x_r)}, \\
              s_{rt}^{(1)}(x_r ; \llbracket x_{r+1}, \dots, x_n \rrbracket, (x_1, \dots, x_t))
               & = \frac{f_0(x_r)}{f_1(x_r)}.
          \end{align*}
    \item \textbf{Density estimators}: If the pre-change and post-change distributions are unknown, one can estimate the likelihood ratio using density estimators on the left and right-hand sides respectively. For instance, suppose one has trained a density estimator $\hat{f}_0$ using the data $(x_1, \dots, x_t)$ and another density estimator $\hat{f}_1$ using the data $(x_{t+1}, \dots, x_n)$. Furthermore, suppose one has trained a density estimator $\hat{f}_0^\mathrm{exch}$ using the data $\llbracket x_1, \dots, x_r \rrbracket$ and another density estimator $\hat{f}_1^\mathrm{exch}$ using the data $\llbracket x_{r+1}, \dots, x_n \rrbracket$. For instance, one could use a weighted ERM to learn $\hat{f}_0$ which places more weight on earlier samples (which are more likely to come from the pre-change distribution $\P_0$. Then, we define the score functions as:
          \begin{align*}
              s_{rt}^{(0)}(x_r ; \llbracket x_1, \dots, x_r \rrbracket, (x_{t+1}, \dots, x_n))
               & = \frac{\hat{f}_1(x_r)}{\hat{f}_0^\mathrm{exch}(x_r)}, \\
              s_{rt}^{(1)}(x_r ; \llbracket x_{r+1}, \dots, x_n \rrbracket, (x_1, \dots, x_t))
               & = \frac{\hat{f}_0(x_r)}{\hat{f}_1^\mathrm{exch}(x_r)}.
          \end{align*}
          For example, one could use a kernel density estimator or a neural network to learn these.
    \item \textbf{Classifier-based likelihood ratio}: One can use a classifier to estimate the likelihood ratio, which allows us to use our method even when the likelihood is intractable. Suppose we have access to a pre-trained classifier $\hat{g}_1 : \mathcal{X} \to [0, 1]$ which outputs an estimated probability that $x \in \mathcal{X}$ was drawn from $\P_1$ (instead of $\P_0$). Then, we can use this classifier to estimate the likelihood ratio:
          \begin{align*}
              s_{rt}^{(0)}(x_r ; \llbracket x_1, \dots, x_r \rrbracket, (x_{t+1}, \dots, x_n))
               & = \frac{\hat{g}_1(x_r)}{1 - \hat{g}_1(x_r)}, \\
              s_{rt}^{(1)}(x_r ; \llbracket x_{r+1}, \dots, x_n \rrbracket, (x_1, \dots, x_t))
               & = \frac{1 - \hat{g}_1(x_r)}{\hat{g}_1(x_r)}.
          \end{align*}
\end{itemize}
\end{appendices}

%------------------------------------------------------------
% END DOCUMENT
%------------------------------------------------------------
\end{document}